\newtheorem*{res}{Result}
\newtheorem{lem}{Lemma}[section]
\newtheorem{thm}[lem]{Theorem}
\newtheorem{prop}[lem]{Proposition}
\newtheorem{cor}[lem]{Corollary}
\theoremstyle{remark}
\newtheorem{expl}{Example}[section]
\numberwithin{equation}{section}
\theoremstyle{definition}
\newtheorem{df}{Definition}
\def\R{\mathbb R}
\def\C{\mathbb C}
\def\D{\mathrm D}
\def\d{\mathrm d}
\def\S{\mathbb S}
\def\i{\mathrm i}
\let\Im\relax
\DeclareMathOperator{\Im}{Im}
\let\Re\relax
\DeclareMathOperator{\Re}{Re}
\DeclareMathOperator{\spec}{spec}
\DeclareMathOperator{\diag}{diag}
\DeclareMathOperator{\bdiag}{b-diag}
\DeclareMathOperator{\trace}{tr}
\DeclareMathOperator{\supp}{supp}
\begin{document}
\title[Anisotropic thermo-elasticity in 2D]{Anisotropic thermo-elasticity in 2D \\  Part I: A unified treatment }
\author{Michael Reissig}
\address{Michael Reissig, Institut f\"ur Angewandte Analysis, Fakult\"at f\"ur Mathematik und Informatik,
TU Bergakademie Freiberg, Pr\"uferstra\ss{}e 9, 09596 Freiberg, Germany}
\author{Jens Wirth}
\address{Jens Wirth, Institut f\"ur Angewandte Analysis, Fakult\"at f\"ur Mathematik und Informatik,
TU Bergakademie Freiberg, Pr\"uferstra\ss{}e 9, 09596 Freiberg, Germany}
\address{{\sl current address:} Department of Mathematics, Imperial College, London SW7 2AZ, UK}

\date{}

\begin{abstract}
In this note we develop tools and techniques for the treatment of anisotropic
thermo-elasticity in two space dimensions. We use a diagonalisation technique
to obtain properties of the characteristic roots of the full symbol of the 
system in order to prove $L^p$--$L^q$ decay rates for its solutions.

Keywords: thermo-elasticity, a-priori estimates, anisotropic media
\end{abstract}

\maketitle

\section{The problem under consideration}
\label{sec:1}

Systems of thermo-elasticity are hyperbolic-parabolic or hyperbolic-hyperbolic coupled systems
(type-1, type-2 or type-3 models) describing the elastic and thermal behaviour of elastic, heat-conducting media. The classical type-1 model of thermo-elasticity is based on Fourier's law, which means, that the heat flux is proportional to the gradient of the temperature. The present paper is devoted to the study of type-1 systems for homogeneous but anisotropic media in $\R^2$. There are different results in the literature for certain anisotropic media (cubic in \cite{Borkenstein}; rhombic in \cite{Doll}). Our goal is to present an approach, which allows to consider (an)isotropic models in $\R^2$ from a unified point of view.

We consider the type-1 system of thermo-elasticity
\begin{subequations}\label{eq:System1}
\begin{align}
   U_{tt}+A(\D)U+\gamma\nabla \theta&=0,\\
   \theta_t-\kappa \Delta \theta +\gamma\nabla^TU_t&=0.
\end{align}
\end{subequations}
Here $A(\D)$ denotes the elastic operator, which is assumed to be a homogeneous second order $2\times 2$
matrix of (pseudo) differential operators and models the elastic properties 
of the underlying medium. Furthermore, $\kappa$ describes the conduction of heat
and $\gamma$ the thermo-elastic coupling of the system. We assume $\kappa>0$ and $\gamma\neq0$. 
We solve the Cauchy problem for system~\eqref{eq:System1} with initial data
\begin{equation}
  U(0,\cdot)=U_1,\qquad U_t(0,\cdot)=U_2,\qquad \theta(0,\cdot)=\theta_0,
\end{equation}
for simplicity we assume $U_1,U_2\in\mathcal S(\R^2,\R^2)$ and $\theta_0\in\mathcal S(\R^2)$.
We denote by $A(\xi)$ the symbol of the elastic operator and we set $\eta=\xi/|\xi|\in\S^1$.
Then some basic examples for our approach are given as follows. The material constants are always
specified in such a way that the matrix $A(\eta)$ becomes positive.
\begin{expl}
  {\sl Cubic} media in 2D are modelled by   
  \begin{equation}
    A(\eta) =
    \begin{pmatrix}
      (\tau-\mu)\eta_1^2+\mu & (\lambda+\mu)\eta_1\eta_2 \\ (\lambda+\mu)\eta_1\eta_2   & (\tau-\mu)\eta_2^2+\mu 
    \end{pmatrix}
  \end{equation}
  with constants $\tau,\mu>0$, $-2\mu-\tau<\lambda<\tau$. This case was treated e.g. in \cite{Borkenstein}. For the corresponding elastic system see \cite{Stoth}.
\end{expl}
\begin{expl}
  {\sl Rhombic} media in 2D are modelled by
   \begin{equation}
    A(\eta) =
    \begin{pmatrix}
      (\tau_1-\mu)\eta_1^2+\mu & (\lambda+\mu)\eta_1\eta_2 \\ (\lambda+\mu)\eta_1\eta_2   & (\tau_2-\mu)\eta_2^2+\mu 
    \end{pmatrix}
  \end{equation}
  with constants $\tau_1,\tau_2,\mu>0$ and $-2\mu-\sqrt{\tau_1\tau_2}<\lambda<\sqrt{\tau_1\tau_2}$.   For this case we refer also to \cite{Doll}.
\end{expl}
\begin{expl}
  Although it is not the main point of this note, we can consider
  {\sl isotropic} media, where
  \begin{align}
    A(\eta)&=\mu I+(\lambda+\mu)\eta\otimes\eta \notag\\
    &=\begin{pmatrix}
      (\lambda+\mu)\eta_1^2+\mu & (\lambda+\mu)\eta_1\eta_2 \\ (\lambda+\mu)\eta_1\eta_2   & (\lambda+\mu)\eta_2^2+\mu 
    \end{pmatrix}
  \end{align}
  with Lam\'e constants $\mu>0$ and $\lambda+\mu>0$.
\end{expl}

We will present a unified treatment of these cases of (in general) 
anisotropic thermo-elasticity. For this we assume that the homogeneous symbol
$A=A(\xi)=|\xi|^2A(\eta)$, $\eta=\xi/|\xi|$, is given as a function 
\begin{equation}
  A\;:\;\S^1\to\C^{2\times 2}
\end{equation}
subject to the conditions
\begin{description}
\item[(A1)] $A$ is real-analytic in $\eta\in\S^1$,
\item[(A2)] $A(\eta)$ is self-adjoint and positive for all $\eta\in\S^1$.
\end{description}
For some results we require that
\begin{description}
\item[(A3)] $A(\eta)$ has two distinct eigenvalues, $\#\spec A(\eta)=2$.
\end{description}
Under assumption (A3) the direction $\eta\in\S^1$ is called (elastically) {\em non-degenerate}. 
In this case we know that the elasticity equation $U_{tt}+A(\D)U=0$ is strictly
hyperbolic and can be diagonalised smoothly using a corresponding system
of normalised eigenvectors $r_j(\eta)$ to the eigenvalues $\varkappa_j(\eta)$ of $A(\eta)$.

If (A3) is violated we will call the corresponding directions $\eta\in\S^1$ {\em degenerate}. For these
directions we can use the one-dimensionality of $\S^1$ in 
connection with the analytic perturbation theory of self-adjoint matrices
(cf.\!\! \cite{Kato}). So we can always find locally smooth eigenvalues $\varkappa_j(\eta)$
and corresponding locally smooth normalised eigenvectors $r_j(\eta)$ of $A(\eta)$.
For the following we assume for simplicity that these functions extend to 
global smooth functions on $\S^1$.

This classification of directions is not sufficient for a precise study of $L^p$--$L^q$ decay estimates
for solutions to the thermo-elastic system. It turns out that different microlocal directions $\eta=\xi/|\xi|\in\S^1$
from the phase space have different influence on decay estimates. But how to distinguish these directions and how to understand their influence? In general, this can be done by a refined diagonalisation procedure applied to a corresponding
first order system (first order with respect to time). Applying a partial Fourier transform and chosing a suitable energy (of minimal dimension) this system reads as $\D_t V=B(\xi)V$. The properties of the matrix $B(\xi)$ are essential for our understanding:
\begin{itemize}
\item the notions of {\em hyperbolic} and {\em parabolic directions} depend on the behaviour of the eigenvalues of $B(\xi)$ (see \eqref{eq2.3}, \eqref{eq2.5}, Definition~\ref{df:hyp});
\item the matrix $B(\xi)$ contains spectral data of $A(\xi)$ together with certain {\em coupling functions} (see \eqref{eq2.6}) between different components of the energy. The behaviour of these coupling functions close to hyperbolic directions has an essential influence on decay rates (see
Theorems~\ref{thm3.1}, \ref{thm3.5} and \ref{thm3.6}).
\end{itemize}
It turns out that we have to exclude some exceptional values of the coupling constant $\gamma$ by assuming that (see Definition~\ref{df:hyp} for the notion of hyperbolic directions)
\begin{description}
\item[(A4)]  $\gamma^2\ne2 \varkappa_{j_0}(\bar\eta)-\trace A(\bar\eta)$ for all hyperbolic directions
$\bar\eta$ with respect to  $\varkappa_{j_0}$.
\end{description}
Basically this implies the non-degeneracy of the $1$-homogeneous part of $B(\xi)$. In the following we will call a hyperbolic direction violating (A4) a {\em $\gamma$-degenerate} direction. Assumption (A4) is used for the treatment of small hyperbolic frequencies and plays there a similar r\^ole like (A3) for large frequencies.

In Section~\ref{sec:2} we will give the transformation of the thermo-elastic system
\eqref{eq:System1} to a system of first order and the diagonalisation procedure in detail.
The proposed procedure generalises those from \cite{Wang}, \cite{Wang2}, \cite{ReiWang}, \cite{Yagdjian}. 
The obtained results are used to represent solutions of the original system as Fourier integrals
with complex phases. Based on these representations we give micro-localised decay estimates for solutions in Section~\ref{sec:3}. They and their method of proof depend on
\begin{itemize}
\item the classification of directions (to be hyperbolic or parabolic);
\item the order of contact between Fresnel curves (coming from the elastic part) and their tangents
for hyperbolic directions;
\item the vanishing order of the coupling functions in hyperbolic directions. 
\end{itemize}

Let us formulate some of the results. The first one follows from Theorem~\ref{thm3.1} and Corollary~\ref{cor3.1}.
\begin{res} 
  Under assumptions (A1) to (A4) and if the coupling functions vanish to first order
  in hyperbolic directions the solutions $U(t,x)$ and $\theta(t,x)$ to \eqref{eq:System1} 
  satisfy the  $L^p$--$L^q$ estimate
  \begin{multline}
    \|\D_t U(t,\cdot)\|_q+\|\sqrt{A(\D)} U(t,\cdot)\|_q + \|\theta(t,\cdot)\|_q \\
    \lesssim     (1+t)^{-\frac12(\frac1p-\frac1q)} \left(\|U_1\|_{p,r+1} + \|U_2\|_{p,r} + \|\theta_0\|_{p,r} \right)
  \end{multline}
  for dual indices $p\in(1,2]$, $pq=p+q$, and Sobolev regularity $r>2(1/p-1/q)$.
\end{res}
If the coupling functions vanish to higher order we have to relate their vanishing order $\ell$ to the order of contact $\bar\gamma$ between the Fresnel curve and its tangent in the corresponding direction and for the corresponding sheet. 
In Theorems~\ref{thm3.5} and \ref{thm3.6} we show that in this case the $1/2$ in the exponent is changed to $1/\min(2\ell,\bar\gamma)$.

\medskip
Our main motivation to write this paper is to provide a unified way to treat anisotropic models of thermo-elasiticity. New analytical tools presented in these notes generalise to higher dimensions and allow to treat especially models in 3D (outside degenerate directions).  The two-dimensional results from \cite{Borkenstein} for cubic media and \cite{Doll} for rhombic media are contained / extended; general anisotropic media can be treated. This is discussed in some detail in the second part \cite{WirMedia}
of this note.
 
\medskip
\noindent
{\bf Acknowledgements.} The first author thanks Prof. Wang Ya-Guang (Shanghai Jiao Tong University) for the
discussion about some basic ideas of the approach presented in this paper during his stay at the TU Bergakademie
Freiberg in August 2004. The stay was supported by the German-Chinese research project 446 CHV 113/170/0-2.

\section{General treatment of the thermo-elastic system}
\label{sec:2}

We use a partial Fourier transform with respect to the spatial variables
to reduce the Cauchy problem for \eqref{eq:System1} to the system of ordinary
differential equations
\begin{subequations}\label{eq:System2}
  \begin{align}
    &\hat U_{tt}+|\xi|^2A(\eta)\hat U+\i\gamma\xi\hat\theta=0,\\
    &\hat\theta_t+\kappa|\xi|^2 \hat\theta+\i\gamma\xi\cdot\hat U_t =0, \\
    &\hat U(0,\cdot)=\hat U_1,\quad \hat U_t(0,\cdot)=\hat U_2,\quad \hat\theta(0,\cdot)=\hat\theta_0
  \end{align}
\end{subequations}
parameterised by the frequency variable $\xi$.

We denote by $\varkappa_1,\varkappa_2\in C^\infty(\S^1)$ the eigenvalues of $A(\eta)$ and by $r_1,r_2\in C^\infty(\S^1,\S^1)$
corresponding normalised eigenvectors. Both depend in a real-analytic way on 
$\eta\in\S^1$.
In a first step we reduce \eqref{eq:System2} to a first order system. For this
we use the diagonaliser of the elastic operator, i.e. the matrix 
$M(\eta)=(r_1(\eta)|r_2(\eta))$ build up from the normalised eigenvectors, and define
\begin{equation}
  U^{(0)}(t,\xi)=M^T(\eta)\hat U(t,\xi).
\end{equation}
Then we define by the aid of
\begin{equation}\label{eq2.3}
  \mathcal D^{1/2}(\eta)=\diag(\omega_1(\eta),\omega_2(\eta)),\qquad\omega_j(\eta)=\sqrt{\varkappa_j(\eta)}\in C^\infty(\S^1),
\end{equation}
the vector-valued function
\begin{equation}\label{eq:hatVdef}
  V^{(0)}(t,\xi)=
  \begin{pmatrix}
    (\D_t+\mathcal D^{1/2}(\xi)) U^{(0)}(t,\xi)\\
    (\D_t-\mathcal D^{1/2}(\xi)) U^{(0)}(t,\xi)\\
    \hat\theta(t,\xi)
  \end{pmatrix},
\end{equation}
where as usual $\D_t=-\i\partial_t$.
It satisfies a first order system with apparently simple structure. A short
calculation yields $\D_t V^{(0)}(t,\xi)=B(\xi) V^{(0)}(t,\xi)$ with
\begin{equation}\label{eq2.5}
  B(\xi)=
  \begin{pmatrix}
    \omega_1(\xi) & & & & \i\gamma a_1(\xi) \\
    & \omega_2(\xi) & & & \i\gamma a_2(\xi) \\ 
    & & -\omega_1(\xi) & & \i\gamma a_1(\xi) \\ 
    & & & -\omega_2(\xi) & \i\gamma a_2(\xi) \\
    -\frac{\i\gamma}2 a_1(\xi) &  -\frac{\i\gamma}2 a_2(\xi) & 
    -\frac{\i\gamma}2 a_1(\xi) &  -\frac{\i\gamma}2 a_2(\xi) & \i\kappa|\xi|^2
  \end{pmatrix},
\end{equation}
where we used the {\em coupling functions}
\begin{equation}\label{eq2.6}
  a_j(\xi)=r_j(\eta)\cdot\xi.
\end{equation}
For later use we introduce the notation $B_1(\xi)$ and $B_2(\xi)$ for the
homogeneous components of $B(\xi)$ of order $1$ and $2$, respectively.
The coupling functions $a_j(\eta)$ can be understood as the co-ordinates of $\eta$ with respect 
to the orthonormal eigenvector basis $\{r_1(\eta),r_2(\eta)\}$. Therefore, it holds $a_1^2(\eta)+a_2^2(\eta)=1$.
Furthermore, they are well-defined and real-analytic functions on $\S^1$.

In the following proposition we collect some information on the characteristic
polynomial of the matrix $B(\xi)$.
\begin{prop}\label{prop:2.0}
  \begin{enumerate}
  \item $\trace B(\xi)=\i\kappa|\xi|^2$ and $\det B(\xi)=\i\kappa|\xi|^6\det A(\eta)$.
  \item The characteristic polynomial of $B(\xi)$ is given by
    \begin{align}\label{eq:2.7}
      \det(\nu I-B(\xi)) =& (\nu-\i\kappa|\xi|^2)(\nu^2-\varkappa_1(\xi))(\nu^2-\varkappa_2(\xi))\notag\\
      &-\nu\gamma^2 |\xi|^2a_1^2(\eta)(\nu^2-\varkappa_2(\xi))-\nu\gamma^2|\xi|^2a_2^2(\eta)(\nu^2-\varkappa_1(\xi)).
    \end{align}
  \item An eigenvalue $\nu\in\spec B(\xi)$, $\xi\neq0$, is real if and only if 
    $\nu^2=\varkappa_{j_0}(\xi)$ for an index $j_0=1,2$. If the direction is
    non-degenerate this is equivalent to $a_{j_0}(\eta)=0$.
 \item If $a_j(\eta)\ne 0$, $j=1,2$ the eigenvalues $\nu\in\spec B(\xi)$ satisfy
 \begin{equation}
   1=\frac{\i\kappa|\xi|^2}\nu + \gamma^2\frac{a_1^2(\xi)}{\nu^2-\varkappa_1(\xi)}+ 
   \gamma^2\frac{a_2^2(\xi)}{\nu^2-\varkappa_2(\xi)}.
 \end{equation}
  \end{enumerate}
\end{prop}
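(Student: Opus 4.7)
The plan is to establish (2) first; parts (1), (3), and (4) then fall out of the explicit form \eqref{eq:2.7}.

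For (2), I would partition $\nu I-B(\xi)$ as a $2\times 2$ block matrix whose upper-left $4\times 4$ block is the diagonal matrix $\nu I_4-D_0$ with $D_0=\diag(\omega_1,\omega_2,-\omega_1,-\omega_2)$. Writing the last column (rows $1$--$4$) as $u=-\i\gamma(a_1,a_2,a_1,a_2)^T$, the last row (columns $1$--$4$) as $v^T=\tfrac{\i\gamma}{2}(a_1,a_2,a_1,a_2)$, and the $(5,5)$-entry as $\nu-\i\kappa|\xi|^2$, the Schur complement formula yields
\[
\det(\nu I-B(\xi))=\det(\nu I_4-D_0)\bigl[(\nu-\i\kappa|\xi|^2)-v^T(\nu I_4-D_0)^{-1}u\bigr],
\]
understood as an identity of rational functions in $\nu$. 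Then $\det(\nu I_4-D_0)=(\nu^2-\varkappa_1)(\nu^2-\varkappa_2)$, and after collapsing $\frac{1}{\nu-\omega_j}+\frac{1}{\nu+\omega_j}=\frac{2\nu}{\nu^2-\varkappa_j}$ the Schur correction simplifies to $\gamma^2\nu\bigl[a_1^2(\xi)/(\nu^2-\varkappa_1)+a_2^2(\xi)/(\nu^2-\varkappa_2)\bigr]$. Clearing denominators and recalling $a_j^2(\xi)=|\xi|^2a_j^2(\eta)$ yields exactly \eqref{eq:2.7}.

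From (2) both assertions of (1) follow: the trace is read directly off the diagonal, and evaluating \eqref{eq:2.7} at $\nu=0$ kills the coupling terms and leaves $-\i\kappa|\xi|^2\varkappa_1(\xi)\varkappa_2(\xi)=-\i\kappa|\xi|^6\det A(\eta)$; since $\det(-B)=-\det B$ for a $5\times 5$ matrix this equals $-\det B$, giving the claimed value. For (3), plug a real $\nu$ into \eqref{eq:2.7}: the only imaginary contribution is $-\kappa|\xi|^2(\nu^2-\varkappa_1)(\nu^2-\varkappa_2)$, which vanishes for $\xi\neq 0$ precisely when $\nu^2=\varkappa_{j_0}$ for some $j_0$. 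Substituting back into the real part leaves $-\nu\gamma^2|\xi|^2a_{j_0}^2(\eta)(\varkappa_{j_0}-\varkappa_{j_1})$ with $j_1\neq j_0$; since $\varkappa_{j_0}>0$ one has $\nu\neq 0$, so under (A3) this vanishes iff $a_{j_0}(\eta)=0$. The case $\nu=0$ is excluded throughout by (1) together with the positivity in (A2). Finally for (4), the assumption $a_1,a_2\neq 0$ combined with (3) rules out $\nu^2\in\{\varkappa_1,\varkappa_2\}$ for any eigenvalue, so one may divide \eqref{eq:2.7} by $\nu(\nu^2-\varkappa_1)(\nu^2-\varkappa_2)$ to obtain the displayed implicit equation.

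The only real obstacle is bookkeeping in the Schur-complement step: keeping the $\tfrac{\i\gamma}{2}$ on the last row consistent with the $\i\gamma$ on the last column (they combine to $\gamma^2/2$, doubled back to $\gamma^2$ by the pair $\tfrac{1}{\nu-\omega_j}+\tfrac{1}{\nu+\omega_j}$), and consistently distinguishing the $1$-homogeneous extension $a_j(\xi)$ from its value $a_j(\eta)$ on $\S^1$ when matching the final form of \eqref{eq:2.7}.
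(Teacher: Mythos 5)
The paper states Proposition~\ref{prop:2.0} without proof, so there is no paper argument to compare against; your derivation via the Schur complement of the arrowhead matrix $\nu I - B(\xi)$ is the natural way to obtain \eqref{eq:2.7}, and the bookkeeping checks out (the $\i\gamma$ and $-\tfrac{\i\gamma}{2}$ factors produce $\tfrac{\gamma^2}{2}$, which doubles to $\gamma^2$ under $\tfrac{1}{\nu-\omega_j}+\tfrac{1}{\nu+\omega_j}=\tfrac{2\nu}{\nu^2-\varkappa_j}$, and $a_j^2(\xi)=|\xi|^2a_j^2(\eta)$ converts to the form in the paper). Parts (1), (3), (4) are then correctly read off \eqref{eq:2.7} as you describe; the proof is correct and complete.
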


It turns out that the property of $B(\xi)$ to have real eigenvalues
depends only on the direction $\eta=\xi/|\xi|\in\S^1$. We will introduce a notation. 
\begin{df}\label{df:hyp}
We call a direction $\eta\in\S^1$ {\em hyperbolic} if $B(\xi)$ has a real eigenvalue
and {\em parabolic} if all eigenvalues of $B(\xi)$ have non-zero imaginary part.
\end{df}
In hyperbolic directions we always have a pair of real eigenvalues. 
If $\eta\in\S^1$ is hyperbolic with $\pm\omega_{j_0}(\xi)\in\spec B(\xi)$ for $\xi=|\xi|\eta$, we call
$\eta$ hyperbolic {\em with respect to the index $j_0$} (or with respect to the eigenvalue
$\varkappa_{j_0}(\eta)$ of $A(\eta)$) and $\nu_\pm(\xi)=\pm\omega_{j_0}(\xi)$ the corresponding pair of {\em hyperbolic eigenvalues} of $B(\xi)$. 

A non-degenerate direction is parabolic if and only if $a_j(\eta)\neq0$, $j=1,2$, 
while for  non-degenerate hyperbolic directions one of the coupling functions
$a_{j_0}(\eta)$ vanishes. Degenerate directions are always hyperbolic (in 2D), see \eqref{eq:2.7}.

\begin{expl}
  If the medium is isotropic, $A(\eta)=\mu I+(\lambda+\mu)\eta\otimes\eta$, the eigenvalues of $A$
  are $\mu$ and $\lambda+\mu$ with corresponding eigenvectors $\eta$ and $\eta^\perp$. Thus all
  directions are hyperbolic (with respect to the second eigenvalue). 
  In this case the matrix $B(\xi)$ decomposes
  into a diagonal hyperbolic $2\times2$-block and a parabolic $3\times3$-block. This
  decomposition coincides with the Helmholtz decomposition as used in the 
  standard treatment of isotropic thermo-elasticity.
\end{expl}
\begin{expl}
  For cubic media (where we assume in addition $\mu\neq\tau$ and $\mu+\lambda\neq0$)
  there exist eight hyperbolic directions determined by $\eta_1\eta_2=0$ or
  $\eta_1^2=\eta_2^2$. The functions $a_j(\eta)$ have simple zeros at these directions.
\end{expl}
\begin{expl}
  Weakly coupled cubic media with $\lambda+\mu=0$, $\mu\ne\tau$, have the degenerate 
  directions $\eta_1^2=\eta_2^2$, media with $\mu=\tau$, $\lambda+\mu\ne0$, for $\eta_1\eta_2=0$. 
  In both cases the coupling functions $a_j(\eta)$ do not vanish in these directions.
  
  If $\mu=\tau=-\lambda$, the elastic system decouples directly into two wave equations with
  propagation speed $\sqrt\mu$. In this case
  all directions are degenerate.
\end{expl}
\begin{expl}\label{expl:rhombic}
  For rhombic media we have to distinguish between three cases. \\
  {\sl Case 1.} If the material constants satisfy $(\lambda+2\mu-\tau_1)(\lambda+2\mu-\tau_2)>0$, 
  we are close to the cubic case and
  there exist eight hyperbolic directions given by $\eta_1\eta_2=0$ and
  $$ \eta_1^2(\lambda+2\mu-\tau_1)=\eta_2^2(\lambda+2\mu-\tau_2). $$
  {\sl Case 2.} If we assume on the contrary that 
  $(\lambda+2\mu-\tau_1)(\lambda+2\mu-\tau_2)<0$, only the four hyperbolic directions
  $\eta_1\eta_2=0$ exist. In the Cases 1 and 2 in each hyperbolic direction one of the 
  coupling functions $a_j(\eta)$ vanishes to first order.\\
  {\sl Case 3.} In the borderline case $\tau_1=\lambda+2\mu$ or $\tau_2=\lambda+2\mu$, but
  $\tau_1\ne\tau_2$, three hyperbolic directions collapse to one. We have the four 
  hyperbolic directions $\eta_1\eta_2=0$, at two of them ($\eta_j=\pm1$ if $\tau_j=\lambda+2\mu$)
  the vanishing order of  the coupling function
  is three.\\
  Rhombic media are degenerate if {\sl a)} $\mu=\tau_1$ (or $\mu=\tau_2$) with degenerate direction
  $(0,1)^T$ (or $(1,0)^T$) or {\sl b)}  $\lambda+\mu=0$ (weakly coupled case) and 
  $(\mu-\tau_1)(\mu-\tau_2)>0$ with degenerate directions determined by 
  $\eta_1^2(\mu-\tau_1)=\eta_2^2(\mu-\tau_2)$ or {\sl c)}  $\tau_i=\mu=-\lambda$ (exceptional case)
  where all directions are
  degenerate.
\end{expl}

\begin{prop}\label{prop:2.0.1}
  Let the direction $\bar\eta\in\S^1$ be non-degenerate and hyperbolic with respect 
  to the index $j_0$.
  Then the corresponding eigenvalues $\nu_\pm(\xi)$ satisfy
  \begin{equation}
    \lim_{\eta\to\bar\eta} \frac{a_{j_0}^2(\xi)}{\nu^2_\pm(\xi)-\varkappa_{j_0}(\xi)} = q_{\bar\eta}(|\xi|)=C_{\bar\eta}\mp\i D_{\bar\eta}|\xi|  
  \end{equation}
  for all non-tangential limits with real constants $C_{\bar\eta},D_{\bar\eta}\in\R$, $D_{\bar\eta}>0$.
  Furthermore, the imaginary part of the hyperbolic eigenvalue satisfies
  \begin{equation}\label{eq:limitHypEig}
    \lim_{\eta\to\bar\eta}\frac{\Im \nu_\pm(\xi)}{a_{j_0}^2(\eta)}=\frac{\gamma^2}{2\kappa}\frac{D_{\bar\eta}^2|\xi|^2}{C_{\bar\eta}^2+|\xi|^2D_{\bar\eta}^2}>0,
  \end{equation}
  and thus vanishes like $\Im \nu(|\xi|\eta)\sim a_{j_0}^2(\eta)$ as $\eta\to\bar\eta$ for all $\xi\neq0$.
\end{prop}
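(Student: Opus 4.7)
The key vehicle is the scalar characteristic identity from Proposition~\ref{prop:2.0}(4). Rewriting it to isolate the summand corresponding to the critical index $j_0$, and denoting by $j_1$ the other index, we have for every $\eta$ with both coupling functions nonvanishing and every $\nu\in\spec B(\xi)$
\begin{equation}
  \gamma^2\,\frac{a_{j_0}^2(\xi)}{\nu^2-\varkappa_{j_0}(\xi)}
  \;=\; 1 \,-\, \frac{\i\kappa|\xi|^2}{\nu} \,-\, \gamma^2\,\frac{a_{j_1}^2(\xi)}{\nu^2-\varkappa_{j_1}(\xi)}.
\end{equation}
The right-hand side admits a harmless pointwise limit as $\eta\to\bar\eta$, whereas the left-hand side is of the indeterminate form $0/0$; this mismatch is the whole engine of the proof.

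I would specialise to $\nu=\nu_\pm(\xi)$ and pass to the limit. The hyperbolic branch satisfies $\nu_\pm(\xi)\to\pm|\xi|\omega_{j_0}(\bar\eta)\ne 0$ by Proposition~\ref{prop:2.0}(3); non-degeneracy~(A3) ensures $\varkappa_{j_0}(\bar\eta)\ne\varkappa_{j_1}(\bar\eta)$; and $a_{j_1}^2(\bar\eta)=1-a_{j_0}^2(\bar\eta)=1$. Using the homogeneities $a_j^2(\xi)=|\xi|^2 a_j^2(\eta)$ and $\varkappa_j(\xi)=|\xi|^2\varkappa_j(\eta)$ the right-hand side is computed explicitly and delivers
\begin{equation}
  q_{\bar\eta}(|\xi|)\;=\;\Bigl(\tfrac{1}{\gamma^2}-\tfrac{1}{\varkappa_{j_0}(\bar\eta)-\varkappa_{j_1}(\bar\eta)}\Bigr)\;\mp\;\i\,\frac{\kappa}{\gamma^2\,\omega_{j_0}(\bar\eta)}\,|\xi|,
\end{equation}
from which one reads off $C_{\bar\eta}$ and $D_{\bar\eta}=\kappa/(\gamma^2\omega_{j_0}(\bar\eta))>0$ (using $\kappa>0$ and $\omega_{j_0}(\bar\eta)=\sqrt{\varkappa_{j_0}(\bar\eta)}>0$ by positivity of $A$).

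For the second formula I would invert the first. The asymptotic just established reads
\begin{equation}
  \nu_\pm^2(\xi)-\varkappa_{j_0}(\xi)=\frac{a_{j_0}^2(\xi)}{q_{\bar\eta}(|\xi|)}+o\bigl(a_{j_0}^2(\eta)\bigr),
\end{equation}
and, because $\varkappa_{j_0}(\xi)=|\xi|^2\omega_{j_0}^2(\eta)$ stays bounded away from zero while $a_{j_0}^2(\eta)\to0$ along a non-tangential approach, one may take the square root and Taylor-expand:
\begin{equation}
  \nu_\pm(\xi)=\pm|\xi|\omega_{j_0}(\eta)\pm\frac{|\xi|\,a_{j_0}^2(\eta)}{2\,\omega_{j_0}(\eta)\,q_{\bar\eta}(|\xi|)}+o\bigl(a_{j_0}^2(\eta)\bigr).
\end{equation}
The first summand is real and drops out under $\Im$; what remains is $a_{j_0}^2(\eta)$ times $\Im\bigl\{\pm|\xi|/[2\omega_{j_0}(\eta)\,q_{\bar\eta}(|\xi|)]\bigr\}=|\xi|\,D_{\bar\eta}/[2\omega_{j_0}(\eta)(C_{\bar\eta}^2+D_{\bar\eta}^2|\xi|^2)]$, the same positive number for both branches once the $\mp$ built into $q_{\bar\eta}$ is accounted for. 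Substituting the explicit value of $D_{\bar\eta}$ converts the prefactor $D_{\bar\eta}/(2\omega_{j_0}(\bar\eta))$ into $\gamma^2 D_{\bar\eta}^2/(2\kappa)$ and matches~\eqref{eq:limitHypEig}; strict positivity is manifest.

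The main obstacle is the legitimacy of the first-order square-root expansion: it requires the perturbation $a_{j_0}^2(\xi)/q_{\bar\eta}(|\xi|)$ to remain subordinate to $\varkappa_{j_0}(\xi)$, which is precisely why the \emph{non-tangential} restriction is imposed—on cones avoiding tangency to the zero locus $\{a_{j_0}=0\}$ this comparison holds uniformly, whereas along tangential approaches the remainder terms hidden in~\eqref{eq:2.7} would have to be tracked more carefully. All other steps are routine once this expansion is in place.
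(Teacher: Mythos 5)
Your derivation of the first formula coincides exactly with the paper's: both start from the scalar identity of Proposition~\ref{prop:2.0}(4), isolate the $j_0$-summand, and pass to the limit using $\nu_\pm(\xi)\to\pm\omega_{j_0}(\xi)$, $a_{j_1}^2(\bar\eta)=1$, and the non-degeneracy hypothesis; the resulting expressions for $C_{\bar\eta}$ and $D_{\bar\eta}$ agree (up to the factor $\gamma^2$ which the paper absorbs into the definitions). For the second formula the two arguments genuinely diverge. You invert the limit to obtain $\nu_\pm^2(\xi)-\varkappa_{j_0}(\xi)=a_{j_0}^2(\xi)/q_{\bar\eta}(|\xi|)+o(a_{j_0}^2)$, take the square root, Taylor-expand to first order, and read off $\Im\nu_\pm$; this yields in passing an explicit first-order complex expansion of the eigenvalue, which is extra information. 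The paper instead never opens a square root: it computes $\Im q_{\bar\eta}(|\xi|)$ directly from $\Im\bigl[a_{j_0}^2(\xi)/(\nu_\pm^2-\varkappa_{j_0}(\xi))\bigr]=-2\Re\nu_\pm\,\Im\nu_\pm\,a_{j_0}^2(\xi)/|\nu_\pm^2-\varkappa_{j_0}(\xi)|^2$, replaces the denominator by $a_{j_0}^4(\xi)/|q_{\bar\eta}|^2$ from the first limit, and solves algebraically for $\lim\Im\nu_\pm/a_{j_0}^2$. The paper's route is shorter and needs no expansion or branch-tracking of $\sqrt{\cdot}$, while yours gives a usable asymptotic for $\nu_\pm$ itself. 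Two minor points on your write-up: (i) the displayed intermediate step should read $\Im\bigl\{\pm|\xi|/[2\omega_{j_0}(\eta)\,q_{\bar\eta}(|\xi|)]\bigr\}=D_{\bar\eta}|\xi|^2/[2\omega_{j_0}(\eta)(C_{\bar\eta}^2+D_{\bar\eta}^2|\xi|^2)]$ — you dropped a power of $|\xi|$ there, although your stated conclusion after substituting $D_{\bar\eta}$ is correct; (ii) your explanation of the non-tangential restriction is off: for fixed $|\xi|$ the perturbation $a_{j_0}^2(\xi)/q_{\bar\eta}(|\xi|)$ is automatically subordinate to $\varkappa_{j_0}(\xi)$ as $\eta\to\bar\eta$, so the square-root expansion is unproblematic; the caveat in the statement rather concerns the well-definedness and continuity of the labelled branch $\nu_\pm(\xi)$ of the spectrum near $\bar\eta$, not the size of the perturbation.
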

\begin{proof}
  Let for simplicity $j_0=1$. We use the characteristic polynomial of $B(\xi)$ 
  to deduce 
  \begin{align}
    \frac{\gamma^2|\xi|^2a_1^2(\eta)}{\nu^2_\pm-|\xi|^2\varkappa_1(\eta)} 
    &= 1-\frac{\i\kappa|\xi|^2}{\nu_\pm}-\frac{\gamma^2|\xi|^2a_2^2(\eta)}{\nu_\pm^2-|\xi|^2\varkappa_2(\eta)}\notag\\
    &\to 1\mp\frac{\i\kappa|\xi|}{\omega_1(\bar\eta)}-\frac{\gamma^2}{\varkappa_1(\bar\eta)-\varkappa_2(\bar\eta)}\\
    &=\underbrace{1-\frac{\gamma^2}{\varkappa_1(\bar\eta)-\varkappa_2(\bar\eta)}}_{\gamma^2C_{\bar\eta}}\mp\i\underbrace{\frac{\kappa}{\omega_1(\bar\eta)}}_{\gamma^2D_{\bar\eta}}|\xi|
    =\gamma^2q_{\bar\eta}( |\xi| ).
  \end{align}
  The existence of the limit is implied by $\nu_\pm\neq0$ and $\nu_\pm^2\neq \varkappa_2(\xi)$ as consequence of
  $\nu_\pm(\xi)\to\pm|\xi|\omega_1(\eta)$ as $\eta\to\bar\eta$ by Proposition~\ref{prop:2.0}.
  
  Obviously, $\Im q_{\bar\eta}( |\xi| )= \mp\kappa|\xi|/\gamma^2\omega_1(\bar\eta)=\mp D_{\bar\eta}|\xi|$ is non-zero for $\xi\neq0$ and considering the 
  imaginary part of the first limit expression
  \begin{align*}
    \Im q_{\bar\eta}( |\xi| )&=  \lim_{\eta\to\bar\eta} \Im\frac{a_{1}^2(\xi)}{\nu^2_\pm(\xi)-\varkappa_{1}(\xi)}\\
    &= \lim_{\eta\to\bar\eta}\frac{-2 \Re \nu_\pm(\xi)\, \Im \nu_\pm(\xi)\,a_{1}^2(\xi)}{|\nu^2_\pm(\xi)-\varkappa_{1}(\xi)|^2} \\
    &= \mp 2\omega_1( |\xi|\bar\eta) |q_{\bar\eta}( |\xi| )|^2 \lim_{\eta\to\bar\eta}\frac{\Im \nu_\pm(|\xi|\eta)}{|\xi|^2a_{1}^2(\eta)}
  \end{align*}
  proves the second statement, $\lim_{\eta\to\bar\eta}\frac{\Im \nu_\pm(\xi)}{a_{1}^2(\eta)}=\frac{D_{\bar\eta}|\xi|^2}{2\omega_1(\bar\eta)(C_{\bar\eta}^2+|\xi|^2D_{\bar\eta}^2)}.$
\end{proof}

In the case of isolated degenerate directions (others are not of interest, because then the system is decoupled) we can find a replacement for Proposition~\ref{prop:2.0}. 

\begin{prop}\label{prop:2.0a}
Let $\bar\eta\in\S^1$ be an isolated degenerate direction, $\varkappa_1(\bar\eta)=\varkappa_2(\bar\eta)$.
Then the corresponding hyperbolic eigenvalues $\nu_\pm(\xi)$  satisfy
  \begin{equation}\label{eq:2.13}
     \lim_{\eta\to\bar\eta} \frac{\omega_1(\xi)-\nu^2_\pm(\xi)}{\omega_1(\xi)-\omega_2(\xi)}
     = {a_1^2(\bar\eta)}>0,
  \end{equation}
and, therefore,
\begin{equation}
\lim_{\eta\to\bar\eta}\frac{\Im \nu_\pm(\xi)}{\omega_1(\xi)-\omega_2(\xi)}=0,\qquad
\lim_{\eta\to\bar\eta}\frac{\omega_1(\xi)-\Re \nu_\pm(\xi)}{\omega_1(\xi)-\omega_2(\xi)}=a_1^2(\bar\eta).
\end{equation}
\end{prop}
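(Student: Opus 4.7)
The plan is to extract~\eqref{eq:2.13} from the algebraic relation in Proposition~\ref{prop:2.0}(d). Since $\bar\eta$ is isolated degenerate, there is a punctured neighbourhood on $\S^1$ on which $\varkappa_1(\eta)\ne\varkappa_2(\eta)$, so division by $\varkappa_1-\varkappa_2$ is legitimate. Multiplying Proposition~\ref{prop:2.0}(d) by $(\nu^2-\varkappa_1(\xi))(\nu^2-\varkappa_2(\xi))$, exploiting $a_1^2(\eta)+a_2^2(\eta)=1$, and introducing
\[
 x(\xi) := \frac{\nu_\pm^2(\xi)-\varkappa_1(\xi)}{\varkappa_1(\xi)-\varkappa_2(\xi)},\qquad \frac{\nu_\pm^2-\varkappa_2}{\varkappa_1-\varkappa_2}=x+1,
\]
I obtain the master identity
\[
 \left(1-\frac{\i\kappa|\xi|^2}{\nu_\pm}\right)\bigl(\varkappa_1(\xi)-\varkappa_2(\xi)\bigr)\,x(x+1) = \gamma^2|\xi|^2\bigl(x+a_1^2(\eta)\bigr).
\]
As $\eta\to\bar\eta$ the prefactor $\varkappa_1-\varkappa_2$ vanishes, so the quadratic in $x$ degenerates to the linear equation $\gamma^2|\bar\xi|^2(x+a_1^2(\bar\eta))=0$ with unique root $x=-a_1^2(\bar\eta)$, which reads as~\eqref{eq:2.13}.

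The main obstacle is ruling out the runaway branch $x\to\infty$ of the degenerating quadratic. I would handle this by continuous dependence of the roots of a polynomial on its coefficients: at $\bar\xi=|\xi|\bar\eta$ the characteristic polynomial~\eqref{eq:2.7} factors as $(\nu^2-\varkappa_1(\bar\xi))\,q(\nu)$, where $q(\nu):=(\nu-\i\kappa|\xi|^2)(\nu^2-\varkappa_1(\bar\xi))-\nu\gamma^2|\xi|^2$ satisfies $q(\pm\omega_1(\bar\xi))=\mp\omega_1(\bar\xi)\gamma^2|\xi|^2\ne 0$. Hence $\pm\omega_1(\bar\xi)$ are simple eigenvalues of $B(\bar\xi)$, and the labelled hyperbolic eigenvalues must satisfy $\nu_\pm(\xi)\to\pm\omega_1(\bar\xi)$, so in particular $\nu_\pm^2(\xi)-\varkappa_1(\xi)\to 0$. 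If $x$ were unbounded along some subsequence, the master identity would instead force $\nu_\pm^2-\varkappa_1\to\gamma^2|\bar\xi|^2/(1-\i\kappa|\bar\xi|^2/\nu_\pm)\ne 0$, a contradiction. Therefore $x$ stays bounded along every approach and the limit $x\to-a_1^2(\bar\eta)$ is forced.

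For the refined real/imaginary statements I decompose $\nu_\pm=\Re\nu_\pm+\i\,\Im\nu_\pm$ and expand
\[
 \nu_\pm^2 = (\Re\nu_\pm)^2-(\Im\nu_\pm)^2 + 2\i\,\Re\nu_\pm\cdot\Im\nu_\pm.
\]
Since $\varkappa_1-\varkappa_2$ is real and the limit~\eqref{eq:2.13} is real, the imaginary part of $\varkappa_1-\nu_\pm^2$ is $o(\varkappa_1-\varkappa_2)$, giving $\Re\nu_\pm\cdot\Im\nu_\pm = o(\varkappa_1-\varkappa_2)$. Together with $\Re\nu_\pm\to\pm\omega_1(\bar\xi)\ne 0$ and the factorisation $\varkappa_1-\varkappa_2=(\omega_1-\omega_2)(\omega_1+\omega_2)$, this yields $\Im\nu_\pm/(\omega_1-\omega_2)\to 0$. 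Substituting back, the term $(\Im\nu_\pm)^2=o((\varkappa_1-\varkappa_2)^2)$ is negligible, so the real part of~\eqref{eq:2.13} reads $\varkappa_1-(\Re\nu_\pm)^2 = a_1^2(\bar\eta)(\varkappa_1-\varkappa_2)+o(\varkappa_1-\varkappa_2)$. Factoring $\varkappa_1-(\Re\nu_\pm)^2=(\omega_1-\Re\nu_\pm)(\omega_1+\Re\nu_\pm)$ and dividing by $(\omega_1-\omega_2)(\omega_1+\Re\nu_\pm)$, where both $\omega_1+\Re\nu_\pm$ and $\omega_1+\omega_2$ converge to $2\omega_1(\bar\eta)$, produces $(\omega_1-\Re\nu_\pm)/(\omega_1-\omega_2)\to a_1^2(\bar\eta)$ and completes the proof.
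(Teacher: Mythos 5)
The paper states Proposition~\ref{prop:2.0a} without proof; your argument starts from the rational identity in Proposition~\ref{prop:2.0}(d), clears denominators, introduces the normalised quotient $x=(\nu_\pm^2-\varkappa_1)/(\varkappa_1-\varkappa_2)$, and passes to the limit --- which is exactly the route the paper takes for the non-degenerate analogue, Proposition~\ref{prop:2.0.1}, so in spirit this is the same approach. The algebra checks out: the master identity follows using $a_1^2+a_2^2=1$; the factorisation of \eqref{eq:2.7} at $\bar\eta$ into $(\nu^2-\varkappa_1(\bar\xi))q(\nu)$ with $q(\pm\omega_1(\bar\xi))\ne0$ shows $\pm\omega_1(\bar\xi)$ are simple in $\spec B(\bar\xi)$, so by perturbation $\nu_\pm(\xi)\to\pm\omega_1(\bar\xi)$; and the contradiction argument against an unbounded branch of $x$ is sound (dividing the master identity by $x$ forces $\nu_\pm^2-\varkappa_2\to\gamma^2|\xi|^2/(1-\i\kappa|\xi|^2/\nu_\pm)\ne0$, contradicting $\nu_\pm^2\to\varkappa_1(\bar\xi)$). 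Note you tacitly read \eqref{eq:2.13} as $\lim(\varkappa_1(\xi)-\nu_\pm^2(\xi))/(\varkappa_1(\xi)-\varkappa_2(\xi))=a_1^2(\bar\eta)$; the printed form mixes homogeneity degrees ($\omega_1$ is $1$-homogeneous, $\nu_\pm^2$ is $2$-homogeneous) and cannot be meant literally, so your reading is the right one.

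One small caveat in your deduction of the final limit: you factor $\varkappa_1-(\Re\nu_\pm)^2=(\omega_1-\Re\nu_\pm)(\omega_1+\Re\nu_\pm)$ and assert that $\omega_1+\Re\nu_\pm\to2\omega_1(\bar\xi)$. That is correct for $\nu_+$, but for $\nu_-$ one has $\Re\nu_-\to-\omega_1(\bar\xi)$, so $\omega_1+\Re\nu_-\to0$ while $\omega_1-\Re\nu_-\to2\omega_1(\bar\xi)$; the limit you then obtain for $\nu_-$ is $(\omega_1+\Re\nu_-)/(\omega_1-\omega_2)\to a_1^2(\bar\eta)$, i.e.\ the sign in the numerator must track the $\pm$. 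The paper's formulation carries the same sign ambiguity, so this is not a substantive gap in your argument, but it should be made explicit rather than asserted uniformly for both branches.
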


Thus, if $a_1(\bar\eta)\ne0$ then the eigenvalues $\nu_\pm(\xi)$ approach $\pm\omega_1(\xi)$ at the contact order between $\omega_1(\xi)$ and $\omega_2(\xi)$ (while they approach $\pm\omega_1(\xi)$ with a higher order if $a_1(\xi)=0$). 

\subsection{Asymptotic expansion of the eigenvalues as $|\xi|\to0$}

If $|\xi|$ is small the first order part $B_1(\xi)$ dominates $B_2(\xi)$, so the 
properties of the eigenvalues are governed by spectral properties of $B_1(\xi)$. 
\begin{prop}\label{prop:2.1}
  \begin{enumerate}
  \item $\trace B_1(\xi)=0$ and $\det B_1(\xi)=0$.
  \item  
    If the direction $\eta\in\S^1$ is parabolic the nonzero eigenvalues $\tilde\nu$ of
    $B_1(\eta)$ satisfy 
    \begin{equation}
      \gamma^{-2}=\frac{a_1^2(\eta)}{\tilde\nu^2-\varkappa_1(\eta)}+\frac{a_2^2(\eta)}{\tilde\nu^2-\varkappa_2(\eta)}
    \end{equation}
    and are thus real and related to $\varkappa_j(\eta)$ by
    \begin{equation}
      0<\varkappa_1(\eta)<\tilde\nu_1^2(\eta)<\varkappa_2(\eta)<\tilde\nu_2^2(\eta)
    \end{equation}
    (if $\varkappa_1(\eta)<\varkappa_2(\eta)$). 
\item If $\eta$ is non-degenerate and hyperbolic with respect to the index $1$ we
    have $\varkappa_1(\eta)=\tilde\nu_1^2(\eta)$, while for hyperbolic directions with respect to
    the index $2$ three cases occur depending on the size of the coupling constant $\gamma$:
   \begin{equation}
   \begin{cases}
      \gamma^2<\varkappa_2(\bar\eta)-\varkappa_1(\bar\eta):\quad  &\varkappa_2(\eta)=\tilde\nu_2^2(\eta),\\
      \gamma^2=\varkappa_2(\bar\eta)-\varkappa_1(\bar\eta):\quad  &\tilde\nu_1^2(\eta)=\varkappa_2(\eta)=\tilde\nu_2^2(\eta),\\
      \gamma^2>\varkappa_2(\bar\eta)-\varkappa_1(\bar\eta):\quad  &\tilde\nu_1^2(\eta)=\varkappa_2(\eta).
  \end{cases}
  \end{equation}
\item If the direction is 
    degenerate, $\varkappa_1(\eta)=\varkappa_2(\eta)$, we have the eigenvalues $\pm\sqrt{\varkappa_1(\eta)}$ and 
    $\pm\sqrt{\varkappa_1(\eta)+\gamma^2}$.
  \end{enumerate}
\end{prop}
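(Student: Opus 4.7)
My first move is to specialise the characteristic polynomial \eqref{eq:2.7} to $\kappa=0$ and $|\xi|=1$, obtaining
\[
\det(\nu I - B_1(\eta)) = \nu\bigl[(\nu^2-\varkappa_1)(\nu^2-\varkappa_2) - \gamma^2 a_1^2(\nu^2-\varkappa_2) - \gamma^2 a_2^2(\nu^2-\varkappa_1)\bigr],
\]
with $\eta$ suppressed on the right. Part (1) is then immediate: $\trace B_1(\eta)=0$ is read off the diagonal of \eqref{eq2.5} after deleting the $\i\kappa|\xi|^2$ entry, and $\nu=0$ is visibly a root of the above, so $\det B_1(\eta)=0$. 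The corresponding statements for $B_1(\xi)$ follow from the homogeneity $B_1(\xi)=|\xi|B_1(\eta)$.

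For part (2), the parabolicity of $\eta$ (in the non-degenerate case) is equivalent to $a_1(\eta)a_2(\eta)\neq 0$, so none of $\pm\sqrt{\varkappa_j(\eta)}$ can solve the bracketed quartic factor; dividing by $\gamma^2(\nu^2-\varkappa_1)(\nu^2-\varkappa_2)$ then produces the displayed secular equation. To establish the interlacing I set $s=\nu^2$ and analyse
\[
p(s) = (s-\varkappa_1)(s-\varkappa_2) - \gamma^2 a_1^2(s-\varkappa_2) - \gamma^2 a_2^2(s-\varkappa_1),
\]
a monic quadratic in $s$. Direct evaluation yields
\[
p(\varkappa_1) = \gamma^2 a_1^2(\varkappa_2-\varkappa_1) > 0, \qquad p(\varkappa_2) = -\gamma^2 a_2^2(\varkappa_2-\varkappa_1) < 0,
\]
and $p(0) = \varkappa_1\varkappa_2 + \gamma^2(a_1^2\varkappa_2 + a_2^2\varkappa_1) > 0$ thanks to (A2). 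The intermediate value theorem together with $p(s)\to+\infty$ as $s\to\infty$ locates the two real roots $\tilde\nu_1^2\in(\varkappa_1,\varkappa_2)$ and $\tilde\nu_2^2\in(\varkappa_2,\infty)$, which is precisely the claimed interlacing; positivity of both roots then makes $\tilde\nu_j$ real.

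Parts (3) and (4) reduce to factoring the quartic factor after an appropriate substitution. If $\eta$ is hyperbolic with respect to index $1$, then $a_1(\eta)=0$ and $a_2^2(\eta)=1$, and the bracket splits cleanly as $(\nu^2-\varkappa_1)(\nu^2-\varkappa_2-\gamma^2)$; the case $j_0=2$ is symmetric and produces the roots $\varkappa_2$ and $\varkappa_1+\gamma^2$. The three sub-cases of (3) amount to pure bookkeeping: they record which of these two squared roots is labelled $\tilde\nu_1^2$ and which $\tilde\nu_2^2$ under the convention $\tilde\nu_1^2\le\tilde\nu_2^2$ inherited from the interlacing of part (2), the decision being made by comparing $\gamma^2$ with $\varkappa_2-\varkappa_1$. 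For part (4), the assumption $\varkappa_1=\varkappa_2=:\varkappa$ combined with $a_1^2+a_2^2=1$ collapses the bracket to $(\nu^2-\varkappa)(\nu^2-\varkappa-\gamma^2)$, producing the listed set $\{0,\pm\sqrt{\varkappa},\pm\sqrt{\varkappa+\gamma^2}\}$ of eigenvalues.

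The only non-mechanical step is the sign analysis behind the interlacing in part (2); everything else follows once the correct zero is substituted into the characteristic polynomial. One subtlety worth flagging is that the strict sign changes of $p$ at $\varkappa_1$ and $\varkappa_2$ rely on \emph{both} coupling functions being non-zero, which is precisely the parabolicity assumption. In the hyperbolic situation of part (3) one of these strict inequalities degenerates to an equality, which is exactly why one squared eigenvalue of $B_1$ coincides there with $\varkappa_{j_0}$, giving a consistent limiting picture as $\eta$ approaches a hyperbolic direction through parabolic ones.
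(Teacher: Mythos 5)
Your proof is correct, and it follows the route the paper evidently leaves implicit: Proposition~\ref{prop:2.1} is the specialisation of the characteristic polynomial \eqref{eq:2.7} and of Proposition~\ref{prop:2.0}(4) to the $1$-homogeneous part (drop $\kappa$, set $|\xi|=1$), and the interlacing then follows from the sign analysis of the monic quadratic $p(s)$ together with $a_1(\eta)a_2(\eta)\neq0$, which you correctly identify as the operative consequence of parabolicity. All four parts, including the bookkeeping of which root is labelled $\tilde\nu_1^2$ versus $\tilde\nu_2^2$ in part~(3), check out.
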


The existence of five distinct eigenvalues of the homogeneous principal part 
$B_1(\eta)$ for all parabolic and most hyperbolic directions allows us to calculate the full asymptotic expansion of the 
eigenvalues $\nu(\xi)$ of $B(\xi)$ as $|\xi|\to0$.
We will give only the first terms in detail, but provide the whole 
diagonalisation procedure. Assumption (A4) guarantees the non-degeneracy of $B_1(\xi)$. 
Note that, even if (A4) is violated the matrix $B_1(\xi)$ is diagonalisable (as consequence of its block structure).

\begin{prop}\label{prop:2.2} 
  As $|\xi|\to0$ the eigenvalues of the matrix $B(\xi)$ behave as
  \begin{subequations}
    \begin{align}
      \nu_0(\xi)&=\i\kappa|\xi|^2b_0(\eta)+\mathcal O( |\xi|^3),\\
      \nu_j^\pm(\xi)&=\pm|\xi|\tilde\nu_j(\eta)+\i\kappa|\xi|^2b_j(\eta)+\mathcal O( |\xi|^3)
    \end{align}
  \end{subequations}
  for all non-$\gamma$-degenerate directions, where the functions $b_j\in C^\infty(\S^1)$ are given by
  \begin{equation}
    b_0(\eta)=\left(1+\frac{\gamma^2a_1^2(\eta)}{\varkappa_1(\eta)}+\frac{\gamma^2a_2^2(\eta)}{\varkappa_2(\eta)}\right)^{-1}>0
  \end{equation}
  and
  \begin{equation}
    b_j(\eta)=\left(1+\gamma^2a_1^2(\eta)\frac{\tilde\nu_j^2+\varkappa_1(\eta)}{(\tilde\nu_j^2-\varkappa_1(\eta))^2}
      +\gamma^2a_2^2(\eta)\frac{\tilde\nu_j^2+\varkappa_2(\eta)}{(\tilde\nu_j^2-\varkappa_2(\eta))^2}\right)^{-1}\geq0.
  \end{equation}
  Furthermore, $b_j(\eta)>0$ if $\eta$ is parabolic and $b_{j_0}(\eta)=0$
  if $\eta$ is hyperbolic with respect to the index $j_0$. 
\end{prop}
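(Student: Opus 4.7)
The plan is to reduce everything to a single implicit function/perturbation theorem applied to the characteristic polynomial of $B(\xi)$. Substituting $\nu=|\xi|\tilde\nu$ into the polynomial in Proposition~\ref{prop:2.0} and dividing by $|\xi|^5$ yields an analytic family
\begin{equation*}
  q(\tilde\nu,s,\eta)=(\tilde\nu-\i\kappa s)(\tilde\nu^2-\varkappa_1(\eta))(\tilde\nu^2-\varkappa_2(\eta))-s^0\cdot\tilde\nu\gamma^2\bigl[a_1^2(\eta)(\tilde\nu^2-\varkappa_2(\eta))+a_2^2(\eta)(\tilde\nu^2-\varkappa_1(\eta))\bigr]
\end{equation*}
in $\tilde\nu$, with parameter $s=|\xi|$, such that $q(\,\cdot\,,0,\eta)$ has the five roots $0,\pm\tilde\nu_1(\eta),\pm\tilde\nu_2(\eta)$ of Proposition~\ref{prop:2.1}. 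Under (A4) those five roots are pairwise distinct for all non-$\gamma$-degenerate $\eta$, so the implicit function theorem yields smooth branches $\tilde\nu_\bullet(s,\eta)$ and, therefore, the desired asymptotic expansions $\nu_\bullet(\xi)=|\xi|\tilde\nu_\bullet(|\xi|,\eta)$ with the claimed order structure.

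The first non-trivial coefficients follow from $\partial_s\tilde\nu_\bullet(0,\eta)=-\partial_s q/\partial_{\tilde\nu}q$ evaluated at the corresponding root, i.e.\ from standard Rayleigh--Schrödinger perturbation theory applied to the pencil $B_1(\eta)+sB_2(\eta)$, noting that $B_2(\eta)=\i\kappa E_{55}$ only has a $(5,5)$-entry. Hence $\partial_s\tilde\nu_\bullet(0,\eta)=\i\kappa\,(w_\bullet)_5(v_\bullet)_5/(w_\bullet^Tv_\bullet)$ where $v_\bullet,w_\bullet$ are right/left eigenvectors of $B_1(\eta)$. For the zero eigenvalue I would solve $B_1(\eta)v=0$ directly from the five scalar equations (normalising $v_5=1$, getting $v_k=\pm\i\gamma a_j/\omega_j$), do the analogous computation for $w$, and read off $w^Tv=1+\gamma^2a_1^2/\varkappa_1+\gamma^2a_2^2/\varkappa_2$, which gives $b_0(\eta)$. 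For the non-zero eigenvalues $\pm\tilde\nu_j$ the same procedure produces $v_k=\i\gamma a_j/(\pm\tilde\nu_j\mp\omega_j)$ and a corresponding expression for $w_k$; pairing them and summing yields $w^Tv$ equal to the bracket in the formula for $b_j(\eta)$. The symmetry $\tilde\nu_j\mapsto-\tilde\nu_j$ only changes the signs in $v_k,w_k$, leaving $w^Tv$ (which depends only on $\tilde\nu_j^2$) invariant, so both branches $\nu_j^\pm$ carry the same second-order coefficient $\i\kappa b_j(\eta)$.

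It remains to check the sign assertions. For parabolic directions Proposition~\ref{prop:2.1} places $\tilde\nu_j^2(\eta)$ strictly between or above the values $\varkappa_1(\eta),\varkappa_2(\eta)$, so each summand $(\tilde\nu_j^2+\varkappa_i)/(\tilde\nu_j^2-\varkappa_i)^2$ is finite and positive; adding $1$ keeps the sum positive, whence $b_j(\eta)>0$. The case of a hyperbolic direction $\bar\eta$ with respect to $j_0$ is the only delicate point, since the term indexed by $j_0$ in $b_{j_0}^{-1}$ takes the indeterminate form $0/0$. I would read $b_{j_0}$ as the non-tangential limit from nearby parabolic $\eta$ and use the defining equation of Proposition~\ref{prop:2.1}, which forces $\tilde\nu_{j_0}^2(\eta)-\varkappa_{j_0}(\eta)=\gamma^2a_{j_0}^2(\eta)\,(1+o(1))/(1-\gamma^2a_{3-j_0}^2(\bar\eta)/(\varkappa_{j_0}(\bar\eta)-\varkappa_{3-j_0}(\bar\eta)))$ as $\eta\to\bar\eta$, so that $a_{j_0}^2/(\tilde\nu_{j_0}^2-\varkappa_{j_0})^2\to\infty$. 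Thus $b_{j_0}^{-1}\to\infty$ and $b_{j_0}(\bar\eta)=0$, as asserted. Alternatively and perhaps more transparently, one can simply invoke Proposition~\ref{prop:2.0.1}: the imaginary part of the hyperbolic eigenvalue $\nu_\pm(|\xi|\eta)$ vanishes like $a_{j_0}^2(\eta)$, hence its order $|\xi|^2$ coefficient at $\bar\eta$ vanishes, which by uniqueness of the expansion forces $b_{j_0}(\bar\eta)=0$.

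The main obstacle is precisely this last point: making the $0/0$ in the formula for $b_{j_0}$ rigorous at a hyperbolic direction. Using Proposition~\ref{prop:2.0.1} bypasses the issue by identifying $b_{j_0}(\bar\eta)$ with a quantity already known to vanish; the remaining content is the routine but bookkeeping-heavy computation of the left and right null/eigenvectors of $B_1(\eta)$ and the resulting normalisation $w^Tv$, which I would carry out once in the generic parabolic case and propagate to hyperbolic directions by continuity.
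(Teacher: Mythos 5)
Your proposal is correct, and it takes a genuinely different route from the paper. The paper constructs an explicit two-step diagonalisation scheme: first it conjugates $B(\xi)$ by the eigenbasis $L,R$ of $B_1(\eta)$ (so the leading part is already diagonal), and then it builds an iterative diagonaliser $N_k(\xi)=I+\sum|\xi|^jN^{(j)}(\eta)$ that removes off-diagonal terms order by order; the first two diagonal coefficients are then read off by computing the eigenvectors of $B_1(\eta)$ explicitly (the paper's ``Step 3'' is in substance your Rayleigh--Schr\"odinger computation). You instead rescale the characteristic polynomial and invoke the implicit function theorem at $s=0$, obtaining the first correction as $-\partial_sq/\partial_{\tilde\nu}q$. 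Both approaches are sound and compute the same eigenvector overlap quantity; what the paper's heavier machinery buys is the full asymptotic expansion to all orders \emph{and} the diagonaliser $N_k(\xi)R(\eta)$, which is used later in the paper for the asymptotics of the eigenprojections (Section 2.3 and the multiplier estimates in Section 3). Your IFT argument suffices for the stated proposition but would need to be supplemented (analyticity in $s$, plus the eigenprojection formula $P_\nu=\prod(\tilde\nu I-B)(\tilde\nu-\nu)^{-1}$) to recover that extra information. One simplification available in your framework that you did not exploit: at a hyperbolic direction $\bar\eta$ with respect to $j_0$ you have $\tilde\nu_{j_0}^2(\bar\eta)=\varkappa_{j_0}(\bar\eta)$, and therefore $\partial_sq=-\i\kappa(\tilde\nu_{j_0}^2-\varkappa_1)(\tilde\nu_{j_0}^2-\varkappa_2)=0$ directly at the root, so the characteristic-polynomial formula yields $b_{j_0}(\bar\eta)=0$ without ever encountering the $0/0$ in the closed-form expression for $b_j$; the $0/0$ is an artefact of the way $b_j$ is written in the statement, not a defect of the IFT route. (The paper side-steps it too, by computing eigenvectors directly at $\bar\eta$, where the fifth components $r_0,\ell_0$ vanish for the hyperbolic branch.) Minor wording issue: ``Under (A4) those five roots are pairwise distinct for all non-$\gamma$-degenerate $\eta$'' is slightly circular since (A4) \emph{is} the non-$\gamma$-degeneracy assumption; the content is simply that (A4) rules out the coincidence $\tilde\nu_1=\tilde\nu_2$ identified in Proposition~\ref{prop:2.1}(3).
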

\begin{proof}
  We apply a diagonalisation scheme in order to extract the spectral
  information for $B(\xi)$. We assume that the eigenvalues are denoted
  such that $\varkappa_1(\eta)\leq\varkappa_2(\eta)$. 
  
  \medskip\noindent
  {\it Step 1.} By  Proposition~\ref{prop:2.1} we know that the homogeneous 
  first order part $B_1(\eta)$ has the distinct eigenvalues $\tilde\nu_0=0$ and 
  $\tilde\nu_j^\pm(\eta)=\pm\tilde\nu_j(\eta)$, which are ordered as
  $\varkappa_1(\eta)\leq\tilde\nu_1(\eta)\leq\varkappa_2(\eta)\leq\tilde\nu_2(\eta)$ (where equality holds only under the 
  exceptions stated in Proposition~\ref{prop:2.1}). 
  We denote corresponding normalised 
  and bi-orthogonal left and right eigenvectors of the matrix $B_1(\eta)$ by
  ${}_je^\pm(\eta)$ and $e_j^\pm(\eta)$. If we collect them in the matrices
  \begin{subequations}
    \begin{align}
      L(\eta)&=({}_0e(\eta)|{}_1e^+(\eta)|{}_1e^-(\eta)|{}_2e^+(\eta)|{}_2e^-(\eta)),\\
      R(\eta)&=(e_0(\eta)|e_1^+(\eta)|e_1^-(\eta)|e_2^+(\eta)|e_2^-(\eta)),
    \end{align}
  \end{subequations}
  we have $L^*(\eta)R(\eta)=I$ and 
  \begin{equation}\label{eq:Ddef}
    L^*(\eta)B_1(\eta)R(\eta))=\mathcal D_1(\eta)
    =\diag(0,\tilde\nu_1(\eta),-\tilde\nu_1(\eta),\tilde\nu_2(\eta),-\tilde\nu_2(\eta)).
  \end{equation}
  Further we get
  \begin{equation}
    L^*(\eta)B_2(\eta)R(\eta)=\i\kappa b_*(\eta)\otimes\overline{{}_*b(\eta)},
  \end{equation}
  where $b_*(\eta)$ and ${}_*b(\eta)$ are vectors collecting the last entries $b_0(\eta)$, $b_j^\pm(\eta)$ and ${}_0b(\eta)$, ${}_jb^\pm(\eta)$
  of the eigenvectors $e_0(\eta)$, $e_j^\pm(\eta)$ and ${}_0e(\eta)$, ${}_je^\pm(\eta)$, respectively.
  
  The matrix 
  \begin{equation}
    B^{(0)}(\xi)=L^*(\eta)B(\xi)R(\eta)= |\xi|\mathcal D_1(\eta)+|\xi|^2\i\kappa b_*(\eta)\otimes\overline{{}_*b(\eta)}
  \end{equation}
  is diagonalised modulo $\mathcal O( |\xi|^2)$ as $|\xi|\to0$ and has a main part
  with distinct entries. We denote $R^{(2)}(\xi)=|\xi|^2\i\kappa b_*\otimes\overline{{}_*b}$.
  
  \medskip\noindent
  {\it Step 2.} We construct a diagonaliser of $B^{(0)}(\xi)$ as $|\xi|\to0$ of the form
  \begin{equation}
    N_k(\xi)=I+\sum_{j=1}^k |\xi|^j N^{(j)}(\eta).
  \end{equation}
  For this we denote the $k$-homogeneous part of $R^{(k)}(\xi)$ by $\tilde R^{(k)}(\xi)$
  and its entries by $\tilde R^{(k)}_{ij}(\eta)$. Then we set for $k=1,2,\ldots$
  \begin{align}
    \mathcal D_{k+1}(\eta)&=\diag \tilde R^{(k+1)}(\eta),\\
    N^{(k)}(\eta) &=
    \begin{pmatrix}
      0 & \frac{\tilde R^{(k+1)}_{12}(\eta)}{d_1(\eta)-d_2(\eta)} & \cdots & \frac{\tilde R^{(k+1)}_{15}(\eta)}{d_1(\eta)-d_5(\eta)}\\
      \frac{\tilde R^{(k+1)}_{21}(\eta)}{d_2(\eta)-d_1(\eta)} & 0 &  \cdots &\frac{\tilde R^{(k+1)}_{25}(\eta)}{d_2(\eta)-d_5(\eta)}\\
      \vdots & \vdots & \ddots & \vdots \\
      \frac{\tilde R^{(k+1)}_{51}(\eta)}{d_5(\eta)-d_1(\eta)} & \frac{\tilde R^{(k+1)}_{52}(\eta)}{d_5(\eta)-d_2(\eta)} & \cdots & 0
    \end{pmatrix},
  \end{align}
  where $d_{j}(\eta)$ are the entries of $\mathcal D_1(\eta)$. By construction we have 
  the commutator relation 
   \begin{equation}
     [\mathcal D_1(\eta),N^{(k)}(\eta)] 
     = \mathcal D_{k+1}(\eta)-\tilde R^{(k+1)}(\eta),
  \end{equation}
  such that
  \begin{align*}
    R^{(k+2)}(\xi) &= B^{(0)}(\xi)N_k(\xi)-N_k(\xi)\sum_{j=1}^{k+1} |\xi|^j\mathcal D_j(\eta)\notag\\
    &=R^{(k+1)}(\xi)\\
    &\qquad +  |\xi|^k B^{(0)}(\xi)N^{(k)}(\eta) - |\xi|^k N^{(k)}(\eta) \sum_{j=1}^{k+1} |\xi|^j\mathcal D_j(\eta) - N_k(\xi) |\xi|^{k+1}\mathcal D_{k+1}(\eta)\\
    &=R^{(2)}(\xi)|\xi|^kN^{(k)}(\eta)- |\xi|^k N^{(k)}(\eta) \sum_{j=2}^{k+1} |\xi|^j\mathcal D_j(\eta) - (N_k(\xi)-I) |\xi|^{k+1}\mathcal D_{k+1}(\eta)\\
    &= \mathcal O( |\xi|^{k+2}).
  \end{align*}
  Using $N_k(\xi)-I=\mathcal O( |\xi| )$ we see that for $|\xi|\leq c_k$, $c_k$ sufficiently
  small, the matrix $N_k(\xi)$ is invertible and 
  \begin{equation}
    N_k^{-1}(\xi)B^{(0)}(\xi)N_k(\xi)=\sum_{j=1}^{k+1} |\xi|^j\mathcal D_j(\eta)+ \mathcal O( |\xi|^{k+2}).
  \end{equation}
  Thus, the entries of $\mathcal D_j(\eta)$ contain the asymptotic expansion
  of the eigenvalues, while the rows of $N_k(\xi)R(\eta)$ (and $L(\eta)N_k^{-1}(\xi)$) give 
  asymptotic expansions of the right (and left) eigenvectors of $B(\xi)$. 

  Furthermore, the construction implies that all occurring matrices are smooth functions of $\eta\in\S^1$.

  \medskip\noindent
  {\it Step 3.} We calculate the first terms explicitly.  For this we need
  the diagonal entries of the matrix $b_*\otimes\overline{{}_*b}$. Therefore, we determine
  the left and right eigenvectors of $B_1(\eta)$. If we assume that the direction $\eta$
  is non-degenerate we get for $e_0(\eta)=(r_1^+,r_1^-,r_2^+,r_2^-,r_0)^T$ and ${}_0e(\eta)=(\ell_1^+,\ell_1^-,\ell_2^+,\ell_2^-,\ell_0)^T$ 
  the equations
  \begin{subequations}
  \begin{align}
    \pm r_j^\pm \omega_j +\i\gamma a_j r_0 &=0,  & \pm\overline{\ell_j^\pm} \omega_j-\frac{\i\gamma}2 a_j\overline{\ell_0}&=0,\\
    a_1r_1^++a_2r_2^++a_1r_1^-+a_2r_2^-&=0,\qquad & a_1\overline{\ell_1^+}+a_2\overline{\ell_2^+}+a_1\overline{\ell_1^-}+a_2\overline{\ell_2^-}&=0,
  \end{align}
  \end{subequations}
  together with the normalisation condition
  \begin{equation}
      r_1^+\overline{\ell_1^+}+\cdots r_2^-\overline{\ell_2^-}+r_0\overline{\ell_0}=1.
  \end{equation}
  The first equations imply the representation 
  \begin{align}
    \pm r_j^\pm(\eta) &= -\frac{\i\gamma a_j(\eta)}{\omega_j(\eta)}\, r_0(\eta),\qquad \pm\overline{\ell_j^\pm(\eta)}=\frac{\i\gamma a_j(\eta)}{2\omega_j(\eta)}\, \ell_0(\eta),
  \end{align}
  the second line of equations follows from the first,
  while the normalisation condition yields
  \begin{equation}
    b_0(\eta)=r_0(\eta)\,\overline{\ell_0}(\eta) = \left(1+\frac{\gamma^2a_1^2(\eta)}{\varkappa_1(\eta)}+\frac{\gamma^2a_2^2(\eta)}{\varkappa_2(\eta)}\right)^{-1} \neq0.
  \end{equation}
  To calculate the eigenvectors we can further require $r_0(\eta)=\ell_0(\eta)=\sqrt{b_0(\eta)}>0$.

  Similarly, we obtain for the eigenvectors $e_k^+(\eta)=(r_1^+,\ldots,r_2^-,r_0)^T$ and 
  ${}_ke^+(\eta)=(\ell_1^+,\ldots,\ell_2^-,\ell_0)^T$ the equations (we use the same notation as above 
  in the hope that this will not lead to confusion here)
  \begin{align} \label{eq:eigenvect2}
    \pm r_j^\pm \omega_j +\i\gamma a_j r_0 &=\pm\tilde\nu_kr_j^\pm,  & \pm\overline{\ell_j^\pm} \omega_j-\frac{\i\gamma}2 a_j\overline{\ell_0}&=\pm\tilde\nu_k(\eta)\overline{\ell_j^\pm}
  \end{align}
  together with the normalisation condition. Thus for parabolic directions we get
  \begin{align}
      \pm r_j^\pm(\eta) &= \frac{\i\gamma a_j(\eta)}{\tilde\nu_k(\eta)-\omega_j(\eta)}\, r_0(\eta),\qquad \pm\overline{\ell_j^\pm(\eta)}=-\frac{\i\gamma a_j(\eta)}{2(\tilde\nu_k(\eta)-\omega_j(\eta))}\, \ell_0(\eta), 
  \end{align}
  and hence 
  \begin{align}
    b_k^+(\eta)=r_0(\eta)\,\overline{\ell_0}(\eta)&=\bigg(1+\sum_{j=1,2} \frac{\gamma^2a_j^2(\eta)}{2(\tilde\nu_k(\eta)-\omega_j(\eta))^2}
      +\sum_{j=1,2}\frac{\gamma^2a_j^2(\eta)}{2(\tilde\nu_k(\eta)+\omega_j(\eta))^2} \bigg)^{-1}\notag\\
    &=\left(1+ \gamma^2a_1^2(\eta)\frac{\tilde\nu_k^2(\eta)+\varkappa_1(\eta)}{(\tilde\nu_k^2(\eta)-\varkappa_1(\eta))^2}+\gamma^2a_2^2(\eta)\frac{\tilde\nu_k^2(\eta)+\varkappa_2(\eta)}{(\tilde\nu_k^2(\eta)-\varkappa_2(\eta))^2}
      \right)^{-1}.
  \end{align}
  For $e_k^-(\eta)$ and ${}_ke^-(\eta)$ we have to replace $\tilde\nu_k(\eta)$ by $-\tilde\nu_k(\eta)$ and obtain
  $b_k^-(\eta)=b^+_k(\eta)=b_k(\eta)$.

  If the direction $\eta$ is non-degenerate and hyperbolic with respect to the index $j_0$, the 
  entries $r_{j_0}^\pm$ and $\ell_{j_0}^\pm$ are undetermined by \eqref{eq:eigenvect2}, while the other entries 
  of the vectors are zero. Together with the normalisation condition this determines
  the eigenvectors and gives $b_{j_0}(\eta)=0$. It remains 
  to consider degenerate directions.
  Then we have $\tilde\nu_1=\omega_1$ and $\tilde\nu_2>\omega_1$
  such that for $k=1$ we have $\ell_0=r_0=0$, $r_1^\pm$ and $\ell_1^\pm$ are non-zero while 
  $r_2^\pm=\ell_2^\pm=0$, especially $b_1(\eta)=0$. For $k=2$ we get from the above expression
  for $b_2(\eta)=(2+2\varkappa^2/\gamma^2)^{-1}>0$.
\end{proof}

\noindent
{\bf Remark.} {\sl 1.} Note that for all non-degenerate hyperbolic directions $\bar\eta\in\S^1$ with respect to the index $1$ the limit
\begin{equation}\label{eq:ab-quot}
   \lim_{\eta\to\bar\eta} {a_1^2(\eta)}{b_1^{-1}(\eta)} = 
   \frac2{\gamma^2} 
   \varkappa_1(\bar\eta)\left(1-\frac{\gamma^2}{\varkappa_1(\bar\eta)-\varkappa_2(\bar\eta)}\right)^2
\end{equation}
is taken and non-zero, while for hyperbolic directions with respect to the index $2$ the corresponding
 limit is non-zero only if  $\gamma^2\ne\varkappa_2(\bar\eta)-\varkappa_1(\bar\eta)$, i.e. if the direction
 is  not $\gamma$-degenerate.
Near $\gamma$-degenerate directions Step 1 of the previous proof is still valid. Similar to Step 2 we can diagonalise to a $(2,2,1)$ block structure. The eigenvalues of these blocks can be calculated explicitely.

{\noindent\sl 2.} For degenerate directions we obtain similarly
\begin{equation}
  \lim_{\eta\to\bar\eta} \frac{b_1(\eta)}{(\varkappa_1(\eta)-\varkappa_2(\eta))^2}=\frac{a_1^2(\bar\eta)a_2^2(\bar\eta)}{2\gamma^2\varkappa_1(\bar\eta)}
\end{equation}
and $b_1(\eta)$ vanishes to the double contact order.

\subsection{Asymptotic expansion of the eigenvalues as $|\xi|\to\infty$}\label{sec2.2}

If we consider large frequencies the second order part $B_2(\xi)$ dominates
$B_1(\xi)$. This makes it necessary to apply a different two-step diagonalisation 
scheme. We follow partly ideas from \cite{ReiWang}, \cite{Wang}, \cite{Wang2}
adapted to our special situation.
\begin{prop}\label{prop:2.3}
   As $|\xi|\to\infty$ the eigenvalues of the matrix $B(\xi)$ behave as
  \begin{subequations}
    \begin{align}
      \nu_0(\xi)&=\i\kappa|\xi|^2-\frac{\i\gamma^2}\kappa+\mathcal O( |\xi|^{-1}),\\
      \nu_j^\pm(\xi)&=\pm|\xi|\omega_j(\eta)+\frac{\i\gamma^2}{2\kappa}a_j^2(\eta)+\mathcal O( |\xi|^{-1}).
    \end{align}
  \end{subequations}
  for all non-degenerate directions $\xi/|\xi|\in\S^1$.
\end{prop}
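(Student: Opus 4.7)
The proof will follow a two-step diagonalisation scheme, parallel to but distinct from the one of Proposition~\ref{prop:2.2}. For $|\xi|\to\infty$ the dominant term is $B_2(\xi)$, whose only non-zero entry is $\i\kappa|\xi|^2$ at position $(5,5)$; it is itself highly degenerate (one simple non-zero eigenvalue, a $4$-fold zero), so perturbative diagonalisation cannot proceed in a single step. Step~1 will produce only a $(4,1)$ block-diagonal form, and Step~2 will then apply the scheme of Proposition~\ref{prop:2.2} to the resulting $4\times 4$ block, whose leading part $|\xi|\diag(\omega_1(\eta),\omega_2(\eta),-\omega_1(\eta),-\omega_2(\eta))$ has distinct entries by (A3).

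For Step~1 I would take $N(\xi) = I + M(\xi)$ with $M(\xi) = \mathcal O(|\xi|^{-1})$ supported only at positions $(k,5)$ and $(5,k)$, $k=1,\dots,4$, and solve the leading-order commutator equation $[B_2,M] + C = 0$, with $C$ the coupling part of $B_1$; this produces explicitly $M_{k5} = \gamma a_j(\eta)/(\kappa|\xi|)$ and $M_{5k} = \gamma a_j(\eta)/(2\kappa|\xi|)$ (with $j\in\{1,2\}$ determined by the row). The crucial subtlety is that, because $B$ carries an $|\xi|^2$ entry whereas $M = \mathcal O(|\xi|^{-1})$, the expansion
\[
  N^{-1}B N = B + [B,M] + (M^2B - MBM) + \mathcal O(M^3)
\]
must be retained to second order in $M$: the quadratic term is $\mathcal O(1)$ on the diagonal and is not negligible. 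An explicit computation, using $a_1^2(\eta) + a_2^2(\eta) = 1$, will give $[B_1,M]_{55} = -2\i\gamma^2/\kappa$ and $(M^2B - MBM)_{55} = \i\gamma^2/\kappa$, summing to the claimed shift $-\i\gamma^2/\kappa$ at position $(5,5)$; a parallel calculation delivers the contribution $+\i\gamma^2 a_j^2(\eta)/(2\kappa)$ at the $k$-th diagonal entry of the $4\times 4$ upper block.

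Step~2 then diagonalises this $4\times 4$ block. Since (A3) makes the leading entries $\pm|\xi|\omega_j(\eta)$ distinct, the perturbation scheme from Step~2 of Proposition~\ref{prop:2.2} applies verbatim: a correction $\tilde N = I + \mathcal O(|\xi|^{-1})$ pushes the $\mathcal O(1)$ off-diagonal entries to $\mathcal O(|\xi|^{-1})$ without altering the diagonal entries at $\mathcal O(1)$, yielding the stated asymptotic expansions. The main obstacle will be handling the quadratic term $M^2 B - MBM$ in Step~1 correctly; a naive first-order perturbative argument that drops it produces the wrong $\mathcal O(1)$ constants in all five eigenvalues.
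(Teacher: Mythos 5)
Your proposal is correct and follows the same two-step strategy as the paper: first a block conjugation to isolate the $(5,5)$ corner from the $4\times4$ wave block, then a standard diagonalisation of the $4\times4$ block using (A3). The difference is purely one of bookkeeping in Step~1. The paper constructs $M_k(\xi)=I+\sum_{j=1}^k|\xi|^{-j}M^{(j)}(\eta)$ iteratively and, at each stage, subtracts the block-diagonal candidate $M_k\sum_{j}|\xi|^{-j}\mathcal B_j$ rather than expanding $M_k^{-1}BM_k$; the second iteration ($M^{(2)}$) absorbs precisely the effect you account for explicitly via the quadratic term $M^2B-MBM$. Your explicit computation that $[B_1,M]_{55}=-2\i\gamma^2/\kappa$ and $(M^2B-MBM)_{55}=\i\gamma^2/\kappa$ sum to $-\i\gamma^2/\kappa$, and similarly for the $4\times4$ diagonal entries, reproduces the paper's $\mathcal B_0=\bdiag_{4,1}\tilde R^{(0)}$ and is a genuine check that a naive first-order argument fails.

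One small point you should make explicit: after your Step~1, the matrix $N^{-1}BN$ is \emph{not} $(4,1)$-block diagonal at order $\mathcal O(1)$ — the entries $(j,5)$ and $(5,j)$, $j\le 4$, remain of size $\mathcal O(1)$ (coming from $[B_1,M]$). You still get the claimed eigenvalue asymptotics because the spectral gap between the $(5,5)$ entry $\i\kappa|\xi|^2$ and the wave entries $\pm\omega_j(\eta)|\xi|$ is of size $|\xi|^2$, so second-order perturbation across that gap contributes only $\mathcal O(|\xi|^{-2})$; but as written the claim ``Step~1 will produce only a $(4,1)$ block-diagonal form'' overstates what a single conjugation achieves. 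The paper avoids needing this gap argument by running the recursion one more step (including $M^{(2)}$), which pushes those entries down to $\mathcal O(|\xi|^{-1})$ before reading off eigenvalues. Either route is fine; you should simply state the spectral-gap justification if you stop after one conjugation.
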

\noindent
{\bf Remark.} {\sl 1.} Note that, while in hyperbolic directions we always have 
$\nu_{j_0}^\pm(\xi)= \pm|\xi|\omega_{j_0}(\eta)$ for one index $j_0$, in degenerate hyperbolic directions
all $a_j(\eta)$ may be non-zero. Hence the statement of the above theorem cannot
be valid in such directions in general.

\noindent
{\sl 2.} Degenerate directions play for large frequencies a similar r\^ole as $\gamma$-degenerate directions play for small frequencies. 

\begin{proof} The proof will be decomposed into several steps. In a first
  step we use the main part $B_2(\xi)=\i\kappa|\xi|^2\diag(0,0,0,0,1)$ to block-diagonalise
  $B(\xi)$. In a second step we diagonalise the upper $4\times 4$ block for all 
  non-degenerate directions. 

  \medskip\noindent
  {\it Step 1.} For a matrix $B\in\C^{5\times5}$ we denote by $\bdiag_{4,1} B$
  the block diagonal of $B$ consisting of the upper $4\times4$ block and the lower
  corner entry. We construct a diagonalisation scheme to block-diagonalise
  $B(\xi)$ as $|\xi|\to\infty$. 

  We set $R^{(-1)}(\xi)=B(\xi)-B_2(\xi)=B_1(\xi)$ and $\mathcal B_{-2}(\xi)=B_2(\xi)$ 
  and construct recursively a diagonaliser modulo the upper $4\times4$ block,
  \begin{equation}
    M_k(\xi)=I+\sum_{j=1}^k |\xi|^{-j} M^{(j)}(\eta).
  \end{equation}
  Again we denote by $\tilde R^{(k)}(\eta)$ the $(-k)$-homogeneous part of 
  $R^{(k)}(\xi)$ (which exists because it exists for $R^{(-1)}(\xi)$ and the existence
  is transfered  by the construction). Then we introduce the recursive scheme
  \begin{align}
    \mathcal B_{k-2}(\eta)&=\bdiag_{4,1} \tilde R^{(k-2)}(\eta),\\
    M^{(k)}(\eta) &= \frac\i{\kappa}
    \begin{pmatrix}
       &&& \tilde R^{(k-2)}_{15}(\eta) \\
       &0&& \vdots \\
       &&& \tilde R^{(k-2)}_{45}(\eta) \\
       -\tilde R^{(k-2)}_{51}(\eta)&\cdots& -\tilde R^{(k-2)}_{54}(\eta) & 0 
    \end{pmatrix}
  \end{align}
   for $k=1,2,\ldots$, such that the commutator relation 
  \begin{equation}
       [\mathcal B_{-2}(\eta),M^{(k)}(\eta)] 
     = \mathcal B_{k-2}(\eta)-\tilde R^{(k-2)}(\eta),
  \end{equation}
  holds. Thus it follows  
  \begin{align}
    R^{(k-1)}(\xi)&=B(\xi)M_k(\xi)-M_k(\xi)\sum_{j=-2}^{k-2} |\xi|^{-j} \mathcal B_j(\eta)=\mathcal O( |\xi|^{1-k})
  \end{align}
  and using that $M_k(\xi)$ is invertible for $|\xi|\geq C_k$, $C_k$ sufficiently large,
  we obtain the block diagonalisation
  \begin{equation}
    M_k^{-1}(\xi)B(\xi)M_k(\xi)=\sum_{j=-2}^{k-2} |\xi|^{-j}\mathcal B_j(\eta)+\mathcal O( |\xi|^{1-k}),
  \end{equation}
  where $\mathcal B_j(\eta)=\bdiag_{4,1}\mathcal B_j(\eta)$ is $(4,1)$-block diagonal.
  
  \medskip\noindent
  {\it Step 2.} By Step 1 we constructed $M_k(\xi)$ such that $M_k^{-1}(\xi)B(\xi)M_k(\xi)$
  is $(4,1)$-block diagonal modulo $\mathcal O( |\xi|^{1-k})$. The upper $4\times4$ block
  has already diagonal main part $|\xi|^{-1}\mathcal D_{-1}(\eta)=B_1(\xi)$. If the direction
  $\eta=\xi/|\xi|$ is non-degenerate, the diagonal entries $\pm\omega_1(\eta)$ and $\pm\omega_2(\eta)$ are
  mutually distinct and thus we can apply the standard diagonalisation 
  procedure (cf. proof of Proposition~\ref{prop:2.2}) in the corresponding
  subspace. This does not alter the lower corner entry and gives only  
  combinations of the entries of the last column and of the last row (without
  changing their asymptotics).
  
  Thus we can construct a matrix $N_{k-1}(\xi)=I+\sum_{j=1}^{k-1} |\xi|^{-j}N^{(j)}(\xi)$, which is 
  invertible for $|\xi|>\tilde C_{k-1}$, $\tilde C_{k-1}$ sufficiently large, such that
  \begin{equation}
    N_{k-1}^{-1}(\xi)M_k^{-1}(\xi)B(\xi)M_k(\xi)N_{k-1}(\xi) = |\xi|^2\mathcal B_{-2}(\eta) 
    + \sum_{j=-1}^{k-2} |\xi|^{-j}\mathcal D_j(\eta)
    + \mathcal O( |\xi|^{1-k})
  \end{equation}
  is diagonal modulo $\mathcal O( |\xi|^{1-k})$.
  
  \medskip\noindent
  {\it Step 3.} We give the first matrices explicitly. Following Step 1 we get
  \begin{equation}
    M^{(1)}(\eta)=
    \begin{pmatrix}
      &&&& \frac{\gamma a_1(\eta)}{\kappa}\\  
      &&&& \frac{\gamma a_2(\eta)}{\kappa}\\
      &&&& \frac{\gamma a_1(\eta)}{\kappa}\\  
      &&&& \frac{\gamma a_2(\eta)}{\kappa}\\
      \frac{\gamma a_1(\eta)}{2\kappa}&\frac{\gamma a_2(\eta)}{2\kappa}&\frac{\gamma a_1(\eta)}{2\kappa}&\frac{\gamma a_2(\eta)}{2\kappa}&0
    \end{pmatrix}
  \end{equation}
  together with $\mathcal B_{-1}(\eta)=\diag(\omega_1(\eta),\omega_2(\eta),-\omega_1(\eta),-\omega_2(\eta),0)$ and 
  \begin{align}
    \tilde R^{(0)}(\eta) &= B_1(\eta)M^{(1)}(\eta) - M^{(1)}(\eta)\mathcal B_{-1}(\eta)\notag\\
    &=
    \begin{pmatrix}
      \i\frac{\gamma^2a_1^2(\eta)}{2\kappa} &  \i\frac{\gamma^2a_1(\eta)a_2(\eta)}{2\kappa} & \cdots & \frac{\gamma a_1(\eta)\omega_1(\eta)}{2\kappa}\\
      \i\frac{\gamma^2a_1(\eta)a_2(\eta)}{2\kappa} &  \i\frac{\gamma^2a_2^2(\eta)}{2\kappa} & \cdots & \frac{\gamma a_2(\eta)\omega_2(\eta)}{2\kappa}\\
      \vdots & \vdots & \ddots & \vdots \\
      \frac{\gamma a_1(\eta)\omega_1(\eta)}{2\kappa} &   \frac{\gamma a_2(\eta)\omega_2(\eta)}{2\kappa} & \cdots & -\i\frac{\gamma^2}{\kappa}
    \end{pmatrix},\\
    \mathcal B_0(\eta)&=\bdiag_{4,1}    \tilde R^{(0)}(\eta)
  \end{align}
  in the first diagonalisation step. Applying a second step alters only the last row and
  column to $\mathcal O( |\xi|^{-1})$. Following Step 2 we diagonalise the upper $4\times4$ block to
  $|\xi|\mathcal B_{-1}(\eta)+\mathcal B_{0}(\eta)+\mathcal O( |\xi|^{-1})$ modulo $\mathcal O( |\xi|^{-1})$
  and the statement is proven.
\end{proof}

\noindent{\bf Remark.} If the direction $\eta$ is degenerate, i.e. $\omega_1(\eta)=\omega_2(\eta)$,
we can block-diagonalise in Step 2 to $(2,2,1)$-block form. To diagonalise further we have to
know that the $0$-homogeneous part of these $2\times2$-blocks has distinct eigenvalues. 

One possible treatment of degenerate directions is given in the following proposition. Note that a corresponding statement can be obtained for $\gamma$-degenerate directions as $|\xi|\to0$.

\begin{prop}\label{prop:2.3a}
Let $\bar\eta$ be an isolated degenerate direction. Then the corresponding hyperbolic eigenvalue satisfies 
in a small conical neighbourhood of $\bar\eta$
\begin{multline}\label{eq:degEig}
  \nu_{j_0}^-(\xi)=\frac{\omega_1(\xi)+\omega_2(\xi)}2+\frac{\i\gamma^2}{4\kappa}\\-
	\sqrt{\frac{(\omega_1(\xi)-\omega_2(\xi))^2}4-\frac{\gamma^4}{16\kappa^2}+
\frac{\i\gamma^2(\omega_1(\xi)-\omega_2(\xi))(a_1^2(\eta)-a_2^2(\eta))}{4\kappa}}\\+\mathcal O(|\xi|^{-1}).
\end{multline}
\end{prop}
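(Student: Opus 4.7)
The strategy is to carry out the two-step block-diagonalisation of Proposition~\ref{prop:2.3}, but, since the degeneracy of $\bar\eta$ prevents full diagonalisation, to stop at a $(2,2,1)$-block form; the hyperbolic eigenvalue is then read off by applying the quadratic formula to the relevant $2\times 2$ block.

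Step~1 of the proof of Proposition~\ref{prop:2.3} can be repeated verbatim in a conical neighbourhood of $\bar\eta$, since its construction uses only the spectral gap between $\i\kappa|\xi|^2$ and the eigenvalues of $B_1(\xi)$ and is insensitive to the coincidence $\omega_1(\bar\eta)=\omega_2(\bar\eta)$. It produces the parabolic eigenvalue $\nu_0(\xi)=\i\kappa|\xi|^2+\mathcal O(1)$ together with an upper $4\times 4$ block of the form
$$|\xi|\,\diag\bigl(\omega_1,\omega_2,-\omega_1,-\omega_2\bigr)(\eta)+\mathcal B_0(\eta)\big|_{4\times 4}+\mathcal O(|\xi|^{-1}),$$
where, by Step~3 of that proof, the $(i,j)$-entry of $\mathcal B_0|_{4\times 4}$ equals $\i\gamma^2 a_{i'}(\eta) a_{j'}(\eta)/(2\kappa)$ with $i'\in\{1,2\}$ obtained by reducing $i$ modulo $2$.

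Next I decouple this $4\times 4$ block into two $2\times 2$ blocks associated with the sign groups $\{+\omega_1,+\omega_2\}$ and $\{-\omega_1,-\omega_2\}$. The denominators entering the corresponding off-block-diagonal commutator equation are all of the form $\omega_i(\eta)+\omega_j(\eta)$, which remain bounded away from zero near $\bar\eta$; crucially, no denominator of the type $\omega_1(\eta)-\omega_2(\eta)$ ever appears because we decouple only across the two sign groups and never within them. The resulting conjugation $I+\mathcal O(|\xi|^{-1})$ preserves the within-block entries of the first two orders, and the $(+,+)$ block reads
$$M_+(\xi)=|\xi|\begin{pmatrix}\omega_1(\eta)&0\\0&\omega_2(\eta)\end{pmatrix}+\frac{\i\gamma^2}{2\kappa}\begin{pmatrix}a_1^2(\eta)&a_1(\eta)a_2(\eta)\\a_1(\eta)a_2(\eta)&a_2^2(\eta)\end{pmatrix}+\mathcal O(|\xi|^{-1}).$$
The only delicate point of the whole argument is the uniform invertibility of these conjugators as $\eta\to\bar\eta$, but this is guaranteed by the positivity and smoothness of $\omega_1+\omega_2$ near $\bar\eta$.

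Finally I read off the eigenvalues of $M_+$ from the quadratic formula $\tfrac12\trace M_+\pm\sqrt{(\tfrac12\trace M_+)^2-\det M_+}$. Using $a_1^2+a_2^2=1$ one finds $\trace M_+=\omega_1(\xi)+\omega_2(\xi)+\i\gamma^2/(2\kappa)$; the anti-diagonal contribution $-\gamma^4 a_1^2 a_2^2/(4\kappa^2)$ cancels the corresponding mixed term in the diagonal product, leaving $\det M_+=\omega_1(\xi)\omega_2(\xi)+\i\gamma^2(\omega_1(\xi)a_2^2+\omega_2(\xi)a_1^2)/(2\kappa)$. A further application of $a_1^2+a_2^2=1$ turns $(\tfrac12\trace M_+)^2-\det M_+$ precisely into the radicand appearing in \eqref{eq:degEig}. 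Choosing the branch of the square root that at $\bar\eta$ reduces to $+\i\gamma^2/(4\kappa)$, the "$-$" sign in front of it selects exactly the hyperbolic eigenvalue (namely $|\xi|\omega_1(\bar\eta)$ at $\bar\eta$, while the "$+$" sign yields the non-real partner $|\xi|\omega_1(\bar\eta)+\i\gamma^2/(2\kappa)$), as claimed.
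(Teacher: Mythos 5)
Your proof is correct and follows essentially the same route as the paper: $(4,1)$-block-diagonalise as in Proposition~\ref{prop:2.3}, then decouple the upper $4\times 4$ block into two $2\times 2$ blocks (sign groups), then apply the quadratic formula to the surviving $2\times 2$ block; you merely spell out the paper's ``a simple calculation yields'' in full detail and your trace/determinant/radicand algebra checks out. The one small point you glide over, which the paper makes explicit, is that passing from the eigenvalues $\delta_\pm$ of the truncated model $|\xi|\mathcal B_{-1}+\mathcal B_0$ to those of the full block (with its $\mathcal O(|\xi|^{-1})$ remainder) requires $\delta_+\ne\delta_-$ on the neighbourhood, i.e.\ that the radicand stays away from zero there; the paper notes the possible coincidence occurs only when $a_1(\eta)=a_2(\eta)$ and $(\omega_1(\xi)-\omega_2(\xi))^2=\gamma^4/(4\kappa^2)$, which produces at most isolated parabolic frequencies that can be excluded by shrinking the cone.
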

\begin{proof}
 We follow the treatment of the previous proof to $(2,2,1)$-block-diagonalise $B(\xi)$ modulo
 $|\xi|^{-1}$. Now, we consider one of its $2\times2$-blocks.
 (We use a similar notation as before in the hope that  it will not lead to any confusion.) Such a block is given by
 \begin{equation}
    \mathcal B(\xi)=|\xi| \mathcal B_{-1}(\eta)+\mathcal B_0(\eta)+\mathcal O(|\xi|^{-1}),
 \end{equation}
 where
 \begin{align}
   \mathcal B_{-1}(\eta)&=\diag\big(\omega_1(\eta),\omega_2(\eta)\big),\\
   \mathcal B_{0}(\eta)&=\frac{\i\gamma^2}{2\kappa} \begin{pmatrix} a_1^2(\eta)&a_1(\eta)a_2(\eta)\\
   a_1(\eta)a_2(\eta)&a_2^2(\eta) \end{pmatrix}.
 \end{align}
 In the direction $\bar\eta$ both diagonal entries of $\mathcal B_{-1}$ coincide.
 In a small conical neighbourhood we denote the eigenvalues of $ |\xi| \mathcal B_{-1}(\eta)+\mathcal B_0(\eta)$
 as $\delta_+(\xi)$ and $\delta_-(\xi)$. A simple calculation yields
 \begin{multline}\label{eq:delta-}
   \delta_\pm (\xi) = \frac{\omega_1(\xi)+\omega_2(\xi)}2+\frac{\i\gamma^2}{4\kappa}\\  \pm
	\sqrt{\frac{(\omega_1(\xi)-\omega_2(\xi))^2}4-\frac{\gamma^4}{16\kappa^2}+
\frac{\i\gamma^2(\omega_1(\xi)-\omega_2(\xi))(a_1^2(\eta)-a_2^2(\eta))}{4\kappa}}
 \end{multline}
 with $\delta_-(\bar\xi)=\omega_1(\bar\xi)$ and $\delta_+(\bar\xi)=\omega_1(\bar\xi)+\frac{\i\gamma^2}{2\kappa}$.
The hyperbolic eigenvalue corresponds to $\delta_-(\xi)$. These eigenvalues are distinct in a sufficiently small neighbourhood of $\bar\eta$ (and may coincide only if $a_1(\eta)=a_2(\eta)$ and 
$(\omega_1(\xi)-\omega_2(\xi))^2=\gamma^4/(4\kappa^2)$, which gives eventually two parabolic directions). 

Hence the perturbation theory of matrices implies $\nu_{j_0}^+(\xi)=\delta_-(\xi)+\mathcal O(|\xi|^{-1})$
in a sufficiently small neighbourhood of $\bar\eta$ and the statement is proven.
\end{proof}

\noindent
{\bf Remark.} Note, that
\begin{equation}
\lim_{\eta\to\bar\eta} \frac{\omega_1(\xi)-\delta_-(\xi)}{\omega_1(\xi)-\omega_2(\xi)}= a_1^2(\bar\eta)
\end{equation}
for all fixed $|\xi|$, which coincides with the result \eqref{eq:2.13} of Proposition~\ref{prop:2.0a} for the eigenvalue $\nu(\xi)$. 

\subsection{Collecting the results}

The asymptotic expansions from Propositions~\ref{prop:2.2} and Proposition~\ref{prop:2.3} imply 
estimates for eigenvalues of $B(\xi)$ and the proofs give representations of corresponding eigenvectors.
For the application of multiplier theorems and the proof of $L^p$--$L^q$ decay estimates it is essential to provide also {\em estimates for derivatives} of them.

Assume that the eigenvalues under consideration are simple. From the asymptotic expansions we know
that this is the case for small frequencies and also for large frequencies.
For the middle part it will be sufficient to know that the hyperbolic eigenvalues are separated, which follows
for sufficiently small conical neighbourhoods of these directions.

In a first step we consider derivatives of the eigenvalues. Differentiating the characteristic polynomial
\begin{equation}
  0=\det(\nu(\xi)I-B(\xi))=\sum_{k=0}^5 I_k(\xi) \nu(\xi)^k
\end{equation}
with respect to $\xi$ yields by Leibniz formula
\begin{equation}
 0 = \sum_{k=0}^5 \sum_{\beta\le\alpha} \binom\alpha\beta 
 \big(\D_\xi^{\alpha-\beta} I_k(\xi)\big) \big(\D_\xi^\beta \nu(\xi)^k\big)
\end{equation}
for all multi-indices $\alpha\in\mathbb N_0^2$. Thus we can express the highest derivative of $\nu(\xi)^k$ in terms of lower ones and hence Fa\`a di Bruno's formula (see e.g. \cite{FaaDiBr}) yields an expression
\begin{align}
 & \big(\D^\alpha\nu(\xi)\big)\sum_{k=1}^5 kI_k(\xi) \nu(\xi)^{k-1} = \sum_{k=0}^5 \sum_{\beta<\alpha}
 C_{k,\alpha,\beta} \big(\D^{\alpha-\beta}I_k(\xi)\big)\big(\D^\beta \nu(\xi)\big) 
\end{align}
with certain constants $C_{k,\alpha,\beta}$.
Because the eigenvalue has multiplicity one, the sum on the left-hand 
side is nonzero and therefore we can calculate the derivatives of $\nu(\xi)$ by this expression. 
Furthermore, it follows that for small and large frequencies the derivatives of the eigenvalue have
full asymptotic expansions and thus we are allowed to differentiate the asymptotic expansions term by term.

It remains to consider the corresponding eigenprojections. Recall that if $\nu(\xi)$ is a eigenvalue of multiplicity
one and $r(\xi)$ and $l(\xi)$ are corresponding right and left eigenvectors, the corresponding eigenprojection is
given by the dyadic product $P_\nu(\xi)=l(\xi)\otimes r(\xi)$. Thus the constructed diagonaliser matrices imply asymptotic
expansions of these operators. Again we are only interested whether the derivatives of these eigenprojections
also possess asymptotic expansions (in order to see whether it is allowed to differentiate term by term).

For this we use the representation 
\begin{equation}
  P_\nu(\xi)=\prod_{\tilde\nu \in  \spec{B(\xi)}\setminus\{\nu\} } (\tilde\nu(\xi)I-B(\xi))(\tilde\nu(\xi)-\nu(\xi))^{-1}
\end{equation}
given e.g. in \cite{Doll}, \cite{Liess}. All terms on the right-hand side have full asymptotic expansions
as $|\xi|\to0$ and $|\xi|\to\infty$ together with all of their derivatives. Differentiating with respect to $\xi$ yields the
same result for the eigenprojection. Thus we obtain

\begin{prop}
  The asymptotic expansions from Proposition~\ref{prop:2.2} and Proposition~\ref{prop:2.3} may be differentiated
  term by term to get asymptotic expansions for the derivatives of the eigenvalues. Furthermore, the same holds true
  for the corresponding eigenprojections.
\end{prop}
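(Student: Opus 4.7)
The plan is to handle the eigenvalues and the eigenprojections separately, in both cases exploiting that the eigenvalues under consideration are simple in the regimes $|\xi|\to0$ and $|\xi|\to\infty$ (as is apparent from Propositions~\ref{prop:2.2} and~\ref{prop:2.3}) and remain separated in a sufficiently small conical neighbourhood of each fixed hyperbolic direction for intermediate frequencies. Once this separation is in hand the two tasks reduce to controlled applications of the implicit function theorem plus bookkeeping on differentiation formulas.

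For the eigenvalues I would start from the characteristic equation $0=\sum_{k=0}^{5}I_k(\xi)\,\nu(\xi)^k$ and differentiate it inductively in $|\alpha|$. Applying Leibniz to the products and Fa\`a di Bruno to the powers $\nu(\xi)^k$, the highest derivative $\D^\alpha\nu(\xi)$ appears only through the factor $\sum_{k=1}^{5}kI_k(\xi)\nu(\xi)^{k-1}=\partial_\nu\det(\nu I-B(\xi))|_{\nu=\nu(\xi)}$. Simplicity of $\nu(\xi)$ keeps this factor non-zero, so one can solve for $\D^\alpha\nu(\xi)$ as a rational expression in the lower derivatives $\D^\beta\nu(\xi)$, $\beta<\alpha$, and in the derivatives of the coefficients $I_k(\xi)$. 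Since the $I_k(\xi)$ are polynomials in the entries of $B(\xi)$, hence real-analytic and homogeneous in $\eta$ and polynomial in $|\xi|$, they admit the required asymptotic expansions together with all derivatives. An induction on $|\alpha|$ then propagates this to $\D^\alpha\nu(\xi)$ and shows that the resulting expansion is precisely the term-by-term derivative of the ones stated in Propositions~\ref{prop:2.2} and~\ref{prop:2.3}.

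For the eigenprojections I would use the Lagrange-type identity
$$P_\nu(\xi)=\prod_{\tilde\nu\in\spec B(\xi)\setminus\{\nu\}}\frac{\tilde\nu(\xi)I-B(\xi)}{\tilde\nu(\xi)-\nu(\xi)},$$
valid as long as all five eigenvalues are mutually distinct. Each numerator factor has an asymptotic expansion together with all its $\xi$-derivatives by the previous step and the polynomial dependence of $B(\xi)$ on $\xi$, while each denominator is a smooth non-vanishing function by the same simplicity. Leibniz applied to the finite product then yields full asymptotic expansions for $\D^\alpha P_\nu(\xi)$, again obtained by differentiating term by term the expansions one reads off from the diagonalisers built in the proofs of Propositions~\ref{prop:2.2} and~\ref{prop:2.3}.

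I expect the main obstacle to be the uniform non-vanishing of $\partial_\nu\det(\nu I-B(\xi))|_{\nu=\nu(\xi)}$ and of the spectral gaps $\tilde\nu(\xi)-\nu(\xi)$. Near $|\xi|=0$ this is exactly what assumption~(A4) guarantees by excluding $\gamma$-degenerate directions; near $|\xi|=\infty$ it follows from assumption~(A3) together with the asymptotics $\nu_0(\xi)\sim\i\kappa|\xi|^2$, which separates the parabolic eigenvalue from the four hyperbolic-type ones; for the intermediate range one localises to a sufficiently thin conical neighbourhood of each fixed direction, where Proposition~\ref{prop:2.0}(3) keeps the relevant hyperbolic pair bounded away from the remaining eigenvalues.
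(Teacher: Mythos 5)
Your proposal is correct and follows essentially the same approach as the paper: differentiate the characteristic polynomial via Leibniz and Fa\`a di Bruno, isolate $\D^\alpha\nu(\xi)$ using the non-vanishing of $\sum_k kI_k(\xi)\nu(\xi)^{k-1}$ guaranteed by simplicity, and for the eigenprojections use the Lagrange-type product representation $P_\nu=\prod_{\tilde\nu\ne\nu}(\tilde\nu I-B)(\tilde\nu-\nu)^{-1}$. Your remark identifying the leading factor as $\partial_\nu\det(\nu I-B(\xi))|_{\nu=\nu(\xi)}$ and the closing paragraph tracing where (A3), (A4) and the conical localisation enter are a useful elaboration, but the route is the paper's.
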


From Proposition~\ref{prop:2.0} we know that
an eigenvalue $\nu(\xi)$ of the matrix $B(\xi)$ is real if and only if $\eta=\xi/|\xi|$ is hyperbolic.
We want to combine this information with the asymptotic expansions of Proposition~\ref{prop:2.2}
and Proposition~\ref{prop:2.3} and derive some estimates for the {\em behaviour of the imaginary part}.

\begin{prop}\label{prop:2.4} Let $c>0$ be a given constant.
  \begin{enumerate}
  \item Let $\eta=\xi/|\xi|\in\S^1$ be parabolic. Then the eigenvalues of $B(\xi)$ satisfy
    \begin{equation}
      \Im \nu(\xi) \geq C_\eta  >0,\qquad |\xi|\geq c,
    \end{equation}
    with a constant $C_\eta$ depending on the direction $\eta$ and $c$. Furthermore,
    \begin{equation}
      \Im \nu(\xi)\sim b(\eta)|\xi|^2,\qquad |\xi|\leq c,
    \end{equation}
    where $b(\eta)$ is one of the functions from Proposition~\ref{prop:2.2}.
  \item Let $\bar\eta$ be non-degenerate and hyperbolic with respect to the index $1$. 
    Then $\nu_{1}^\pm(|\xi|\bar\eta)=\pm|\xi|\omega_1(\bar\eta)$ 
    and $\nu_0(\xi)$ and $\nu_2^\pm(\xi)$ satisfy the statement of point 1. Furthermore,
    \begin{equation}\label{eq:zeroImEig1}
      \Im \nu_1^\pm(\xi) \sim  a_1^2(\eta),\qquad |\xi|\geq c,\quad |\eta-\bar\eta|\ll1,
    \end{equation}
    and
    \begin{equation}\label{eq:zeroImEig2}
      \Im \nu_1^\pm(\xi) \sim |\xi|^2a_1^2(\eta), \qquad |\xi|\leq c,\quad |\eta-\bar\eta|\ll1.  
    \end{equation}
  \end{enumerate}
\end{prop}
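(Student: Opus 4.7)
The plan is to combine the small-$|\xi|$ and large-$|\xi|$ asymptotic expansions from Propositions~\ref{prop:2.2} and~\ref{prop:2.3}, the real-eigenvalue criterion from Proposition~\ref{prop:2.0}, and the refined limit from Proposition~\ref{prop:2.0.1}, stitching the two asymptotic regimes together by a compactness argument on any intermediate annulus $c \leq |\xi| \leq R$.

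For Part (1), I fix a parabolic direction $\eta$. By Proposition~\ref{prop:2.0}(3) no eigenvalue of $B(\xi)$ is real, hence $\Im\nu(\xi) > 0$ for all $|\xi| > 0$. The expansion from Proposition~\ref{prop:2.2} gives $\Im\nu(\xi) = \kappa b(\eta)|\xi|^2(1+\mathcal O(|\xi|))$ with $b(\eta) > 0$ in parabolic directions, yielding the small-frequency asymptotic $\Im\nu(\xi) \sim b(\eta)|\xi|^2$. For large $|\xi|$, Proposition~\ref{prop:2.3} provides $\Im\nu_0(\xi) \sim \kappa|\xi|^2$ and $\Im\nu_j^\pm(\xi) = \frac{\gamma^2}{2\kappa}a_j^2(\eta) + \mathcal O(|\xi|^{-1})$, both bounded below by a positive constant since $a_j(\eta) \neq 0$ in parabolic directions. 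The intermediate range $c \leq |\xi| \leq R$ is compact, $\Im\nu$ is continuous and strictly positive there, and thus attains a positive minimum. Taking the smallest of these three lower bounds supplies $C_\eta$.

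For Part (2), let $\bar\eta$ be non-degenerate hyperbolic with respect to index~$1$, so $a_1(\bar\eta) = 0$ and $a_2(\bar\eta) \neq 0$. The identity $\nu_1^\pm(|\xi|\bar\eta) = \pm|\xi|\omega_1(\bar\eta)$ follows at once from Proposition~\ref{prop:2.0}(3). The remaining eigenvalues $\nu_0$ and $\nu_2^\pm$ inherit the behavior of Part (1) in a sufficiently small conical neighborhood $|\eta - \bar\eta| \ll 1$: Proposition~\ref{prop:2.2} shows $b_0(\bar\eta), b_2(\bar\eta) > 0$ (only $b_1$ vanishes at a hyperbolic direction w.r.t.\ index~$1$), while Proposition~\ref{prop:2.3} gives positive lower bounds at infinity since $a_2(\bar\eta) \neq 0$. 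The compactness argument of Part (1) then applies verbatim, once the neighborhood is shrunk enough to keep these three eigenvalues separated from the hyperbolic pair $\nu_1^\pm$.

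The delicate step is the two-sided equivalence for $\Im\nu_1^\pm$. For $|\xi| \leq c$, I would combine $\Im\nu_1^\pm(\xi) = \kappa|\xi|^2 b_1(\eta)(1+\mathcal O(|\xi|))$ from Proposition~\ref{prop:2.2} with the remark~\eqref{eq:ab-quot} asserting $b_1(\eta) \sim a_1^2(\eta)$ as $\eta \to \bar\eta$, yielding~\eqref{eq:zeroImEig2}. For $|\xi| \geq c$, Proposition~\ref{prop:2.0.1} furnishes a positive limit for $\Im\nu_1^\pm(\xi)/a_1^2(\eta)$ that depends smoothly on $|\xi|$ and is bounded away from $0$ and $\infty$ on every $[c, R]$; shrinking the direction-neighborhood then gives~\eqref{eq:zeroImEig1} on any bounded range of $|\xi|$. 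The main obstacle is uniformity as $|\xi| \to \infty$: the error $\mathcal O(|\xi|^{-1})$ of Proposition~\ref{prop:2.3} may in principle swamp $a_1^2(\eta)$ for $\eta$ very close to $\bar\eta$. I would resolve this by exploiting the analyticity of the simple eigenvalue $\nu_1^\pm$ and the second-order vanishing of $\Im\nu_1^\pm$ in $a_1$: the ratio $\Im\nu_1^\pm(\xi)/a_1^2(\eta)$ extends to a positive smooth function on $[c, \infty) \times \{|\eta - \bar\eta| < \varepsilon\}$ whose $|\xi| \to \infty$ limit agrees with the constant $\gamma^2/(2\kappa)$ obtained from~\eqref{eq:limitHypEig} and Proposition~\ref{prop:2.3}, and uniform two-sided bounds on this extended ratio close the equivalence~\eqref{eq:zeroImEig1}.
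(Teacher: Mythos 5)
Your proposal is correct and follows essentially the same strategy as the paper: combine the $|\xi|\to0$ and $|\xi|\to\infty$ expansions from Propositions~\ref{prop:2.2} and~\ref{prop:2.3} with compactness of the intermediate annulus, and for the hyperbolic eigenvalues pass to the ratio $\Im\nu_1^\pm(\xi)/a_1^2(\eta)$, which Proposition~\ref{prop:2.0.1} shows to extend to a smooth positive function near $\bar\eta$, so that its own asymptotic expansions (together with~\eqref{eq:ab-quot}) yield the uniform two-sided bounds. The paper formalizes this by introducing the auxiliary function $\mathrm N_1^\pm(\xi)=\Im\nu_1^\pm(\xi)/a_1^2(\eta)$ at the outset and deriving its expansions directly, which resolves exactly the uniformity concern you flag as $|\xi|\to\infty$; your sketch identifies the same resolution.
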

\begin{proof}
  The first point follows directly from the asymptotic expansions, we concentrate on the second
  one. We know that the hyperbolic eigenvalues $\nu_1^\pm(\xi)$ satisfy by Proposition~\ref{prop:2.0.1}
  \begin{equation}
     \Im \nu_1^\pm(\xi) = a_1^2(\eta) \mathrm N_1^\pm(\xi)
  \end{equation}
  with a smooth non-vanishing function $\mathrm N_1^\pm(\xi)$ defined in a 
  neighbourhood of $\bar\eta$. By   
  Proposition~\ref{prop:2.2} and \ref{prop:2.3} we see that $\mathrm N_1^\pm(\xi)$ also has
  full asymptotic expansions and thus
  \begin{subequations}
  \begin{align}
      \mathrm N_1^\pm (\xi)&=\frac{\i\gamma^2}{2\kappa}+\mathcal O(|\xi|^{-1}),\qquad |\xi|\to\infty\\
      \mathrm N_1^\pm (\xi)&=\i\kappa|\xi|^2\frac{b_1(\eta)}{a_1^2(\eta)}+\mathcal O(|\xi|^3),
      \qquad |\xi|\to0.
  \end{align}
  \end{subequations}
  Together with \eqref{eq:ab-quot}  we get upper and lower bounds on $\mathrm N_1^\pm(\xi)$
  and the desired statement follows.\end{proof}

\noindent
{\bf Remark.} A similar reasoning allows to replace the hyperbolic eigenvalue near degenerate
directions by the model expression obtained in Proposition~\ref{prop:2.3a}, thus 
$\nu(\xi)\sim\delta_-(\xi)$ uniformly in a sufficiently small conical neighbourhood of $\bar\eta$.

\section{Decay estimates for solutions}
\label{sec:3}

Our strategy to give decay estimates for solutions to the thermo-elastic system \eqref{eq:System1} is to 
micro-localise them. In principle we have to distinguish four different cases. On the one hand
we differentiate between small and large frequencies, on the other hand between hyperbolic directions and parabolic ones. 

We distinguish between two cases depending on the vanishing order of the coupling functions. If the
coupling functions vanish to first order at hyperbolic directions only, we rely on simple multiplier estimates. Later on we discuss coupling functions with higher vanishing order, where the decay rates are obtained by tools closely related to the treatment of the elasticity equation.

\subsection{Coupling functions with simple zeros}
\label{sec3.1}

In a first step we consider the first order system
\begin{equation}\label{eq:System3}
  \D_t V = B(\D) V,\qquad V(0,\cdot)=V_0
\end{equation}
to Cauchy data $V_0\in\mathcal S(\R^2,\C^5)$. For a cut-off function $\chi\in C^\infty(\R_+)$ with
$\chi(s)=0$, $s\leq \epsilon$, and $\chi(s)=1$, $s\geq 2\epsilon$, we consider 
\begin{align}
  P_{par}(\eta)&= \prod_{\text{$\bar\eta$ hyperbolic}} \chi(|\eta-\bar\eta|),\qquad &
  P_{hyp}(\eta)&= 1- P_{par}(\eta).
\end{align}
Then $P_{hyp}(\D)$ localises in a conical neighbourhood of the set of hyperbolic directions,
while $P_{par}(\D)$ localises to a compact set of parabolic directions.  

The asymptotic formulae and representations for the characteristic roots of the full symbol $B(\xi)$ 
allow us to proof decay estimates for the solutions.

\begin{thm}\label{thm3.1}
  Assume that (A1) to (A4) are satisfied and the coupling functions vanish in hyperbolic
  directions to first order.
  
  Then the solution $V(t,x)$ to \eqref{eq:System3} satisfies the following {\sl a-priori} estimates:
  \begin{subequations}
    \begin{align}
      \| \chi(\D)P_{par}(\D) V(t,\cdot)\|_q &\lesssim e^{-Ct} \|V_0\|_{p,r}\\
      \| (1-\chi(\D))P_{par}(\D) V(t,\cdot)\|_q &\lesssim (1+t)^{-(\frac1p-\frac1q)} \|V_0\|_{p}\\
      \| \chi(\D)P_{hyp}(\D) V(t,\cdot)\|_q &\lesssim  (1+t)^{-\frac12(\frac1p-\frac1q)} \|V_0\|_{p,r}\\
      \| (1-\chi(\D))P_{hyp}(\D) V(t,\cdot)\|_q &\lesssim  (1+t)^{-\frac12(\frac1p-\frac1q)} \|V_0\|_{p}
    \end{align}
  \end{subequations}
  for dual indices $p\in(1,2]$, $pq=p+q$, and with Sobolev regularity $r>2(1/p-1/q)$.
\end{thm}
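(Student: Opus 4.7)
My plan is to decompose the propagator $e^{itB(\D)}$ according to the four micro-local zones appearing in the statement and estimate each contribution separately, using the spectral information collected in Section~\ref{sec:2}. On each zone the eigenvalues of $B(\xi)$ are simple — by Propositions~\ref{prop:2.2} and \ref{prop:2.3} at the extreme frequency ranges, and by a sufficiently small choice of $\epsilon$ in the definition of $P_{par}, P_{hyp}$ in between — so the proposition at the end of Section~\ref{sec2.2} supplies smooth eigenprojections $\Pi_j(\xi)$ with Mikhlin-type derivative bounds. Hence the propagator may be represented in every zone as a finite sum of Fourier multipliers $e^{it\nu_j(\xi)}\Pi_j(\xi)$, and the four estimates reduce to $L^p$--$L^q$ bounds for these.

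On the parabolic side, I would use Proposition~\ref{prop:2.4}(1) to obtain $\Im\nu(\xi)\geq C>0$ on the support of $\chi(\xi)P_{par}(\eta)$, so that $|e^{it\nu_j(\xi)}|\leq e^{-Ct}$ and the remaining symbol is a Mikhlin multiplier bounded on $L^p$; composition with the Sobolev embedding $W^{r,p}(\R^2)\hookrightarrow L^q(\R^2)$ for $r>2(1/p-1/q)$ then yields~(a). For $(1-\chi(\xi))P_{par}(\eta)$ the same proposition gives $\Im\nu(\xi)\gtrsim|\xi|^2$, so the multiplier behaves like a truncated heat semigroup; a scaling argument and Young's inequality then give the heat-type rate $(1+t)^{-(1/p-1/q)}$, with no Sobolev loss since the compact frequency support replaces Sobolev embedding.

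The hyperbolic zones carry the heart of the argument. On $P_{hyp}(\eta)$ the non-hyperbolic eigenvalues still satisfy $\Im\nu\geq C>0$ and are handled as above; the remaining hyperbolic eigenvalue $\nu_{j_0}^\pm(\xi)$ reads, by Proposition~\ref{prop:2.4}(2),
\begin{equation*}
\nu_{j_0}^\pm(\xi) = \pm|\xi|\omega_{j_0}(\eta)+\tfrac{\i\gamma^2}{2\kappa}a_{j_0}^2(\eta)+\mathcal O(|\xi|^{-1})\qquad\text{at }|\xi|\gg 1,
\end{equation*}
and analogously with $\tilde\nu_{j_0}$ in place of $\omega_{j_0}$ and an additional factor $|\xi|^2$ in the imaginary part at $|\xi|\ll1$. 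Since the coupling function $a_{j_0}$ has a simple zero at the hyperbolic direction $\bar\eta$, the associated amplitude contains a Gaussian factor of width $t^{-1/2}$ in the angular variable $\theta=\angle(\eta,\bar\eta)$ (respectively $|\xi|^{-1}t^{-1/2}$ at small frequencies). Combined with the $L^1$--$L^\infty$ decay $t^{-1/2}$ coming from Van der Corput's lemma in the radial variable, this produces the dispersive rate $t^{-1/2}$ on every dyadic frequency band. Riesz--Thorin interpolation with the trivial $L^2$--$L^2$ bound gives $t^{-\frac12(1/p-1/q)}$ on each band; the dyadic sum produces exactly the Sobolev loss $r>2(1/p-1/q)$ in zone~(c), while in zone~(d) the bounded frequency support makes no such loss necessary.

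The main obstacle is the interaction between angular dissipation and radial oscillation in zones~(c) and~(d): one has to verify uniformity of the estimates across dyadic scales, handle the polar change of variables, and in particular control the contribution exactly at $a_{j_0}(\bar\eta)=0$, where the dissipation vanishes and the remaining purely oscillatory term must be estimated by stationary phase for the wave-type phase $x\cdot\xi\pm t|\xi|\omega_{j_0}(\eta)$. It is precisely at this matching point that assumption~(A4) enters, guaranteeing that the hyperbolic eigenvalue is uniformly separated from the other eigenvalues in a full neighbourhood of $\bar\eta$ and thus that the eigenprojections remain well-behaved down to $a_{j_0}=0$.
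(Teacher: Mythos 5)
Your zones (a) and (b) are handled essentially as in the paper: exponential decay from $\Im\nu\geq C>0$ plus Sobolev embedding for the compact parabolic piece, and a heat-type $L^1$--$L^\infty$ estimate plus interpolation for the low-frequency parabolic piece. (The paper's Step~1 actually bypasses eigenprojections altogether and proves the $L^2$ exponential decay directly from $\Im B(\xi)\geq CI$, but that is a cosmetic difference.)

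Your treatment of the hyperbolic zones (c) and (d), however, diverges substantially and introduces a complication the paper deliberately avoids. You propose to combine angular dissipation with radial stationary phase (Van der Corput), and you flag as ``the main obstacle'' the need to estimate by stationary phase ``exactly at $a_{j_0}(\bar\eta)=0$, where the dissipation vanishes.'' This is not how the paper's proof of Theorem~\ref{thm3.1} works, and the paper even says so explicitly: ``We do \emph{not} use stationary phase method.'' The whole point in the simple-zero case is that the angular dissipation \emph{alone} already produces the stated rate. By Proposition~\ref{prop:2.4}, $\Im\nu_{j_0}^\pm\sim a_{j_0}^2(\eta)\sim\phi^2$ near $\bar\eta$, so $|e^{\pm\i t\nu}|\lesssim e^{-Ct\phi^2}$ (with a factor $|\xi|^2$ inserted at small frequencies), and
\[
\int_{-\epsilon}^\epsilon e^{-Ct\phi^2}\,\d\phi \;\sim\; t^{-1/2}.
\]
The Fourier support of $f$ is localised in the zone, so H\"older in the angular variable plus a convergent radial weight $|\xi|^{-r}$ (respectively the compact radial support for small $\xi$) gives the $L^1$--$L^\infty$ bound $t^{-1/2}$, and Riesz--Thorin interpolation does the rest. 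There is no separate contribution ``exactly at $\phi=0$'' to control: a single direction is a set of measure zero in the angular integral, and the $L^1$ norm of $e^{-Ct\phi^2}$ is what matters. Introducing stationary phase here is not only unnecessary, it muddles the accounting — if radial Van der Corput genuinely gave an additional independent factor $t^{-1/2}$ you would conclude $t^{-1}$, which you correctly do not claim, and the resolution is simply that only one of the two mechanisms is being used. The dyadic/Besov decomposition you mention likewise belongs to the higher-order case (Theorem~\ref{thm3.5}, where $\min(2\ell,\bar\gamma)$ governs the exponent and stationary phase really is needed), not to Theorem~\ref{thm3.1}.

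A smaller inaccuracy: you invoke (A4) to ensure separation of the hyperbolic eigenvalue from the others near $\bar\eta$. In the paper (A4) enters for small frequencies, where it makes $B_1(\eta)$ non-degenerate so the asymptotic expansion of Proposition~\ref{prop:2.2} applies; the separation at large frequencies rests on (A3) and the structure of the characteristic polynomial in Proposition~\ref{prop:2.0}.
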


\noindent{\bf Remark.} If $B(\xi)$ is diagonalisable for $\xi\neq0$ (which is valid e.g. if $B(\xi)$ has no double eigenvalues 
for $\xi\neq0$) we can make the result even more precise.
To each eigenvalue $\nu(\xi)\in\spec B(\xi)$ we have corresponding left and right eigenvectors and associated to them
the eigenprojection $P_\nu(\D)$ such that 
\begin{equation}
  P_\nu(\D)V(t,\cdot)=e^{\i t \nu(\D)} P_\nu(\D) V_0.
\end{equation}
Thus, we can single out the influence of one eigenvalue in this way. Note, that this is only of interest
in the neighbourhood of hyperbolic directions and for the corresponding eigenvalue and there the assumption
of diagonalisability of $B(\xi)$ may be skipped (real eigenvalues are always simple, for small $|\xi|$ diagonalisation
works, for large $|\xi|$ everything goes well under the assumption of non-degeneracy and on the middle part
we make the neighbourhood small enough to exclude possible multiplicities).

\begin{proof} We decompose the proof into four parts corresponding to the 
four estimates. The micro-localised estimates are merely standard multiplier 
estimates. We do {\em not} use stationary phase method.
  
\medskip\noindent
{\it Step 1. Parabolic directions, large frequencies.} In this case we have 
uniformly in $\xi\in\supp \chi P_{par}$, the estimate $\Im\nu(\xi)\geq C'>0$. Taking $0<C<C'$
we obtain for these $\xi$
\begin{equation*}
   \Im B(\xi) = \frac{B(\xi)-B^*(\xi)}{2\i} \geq CI
\end{equation*}
in the sense of self-adjoint operators and the estimate follows from the $L^2$--$L^2$ estimate
\begin{align*}
 \frac{\d}{\d t}& \| \chi(\D)P_{par}(\D) V(t,x) \|^2_2 
 =  \frac{\d}{\d t} \| \chi(\xi)P_{par}(\xi) \hat V(t,\xi) \|^2_2 \\
 &= 2\Re  \big( \chi(\xi)P_{par}(\xi)\hat V(t,\xi),\chi(\xi)P_{par}(\xi) \partial_t \hat V(t,\xi) \big)\\
 &= -2\Im  \big( \chi(\xi)P_{par}(\xi)\hat V(t,\xi),\chi(\xi)P_{par}(\xi) B(\xi) \hat V(t,\xi) \big)\\
 &\leq -2 C  \| \chi(\xi)P_{par}(\xi) \hat V(t,\xi) \|^2=-2C \| \chi(\D)P_{par}(\D) V(t,x) \|^2_2, 
\end{align*}
viewed as $H^s$--$H^s$ estimate and combined with Sobolev embedding.

\medskip\noindent
{\it Step 2. Parabolic directions, small frequencies.} We know from 
Proposition~\ref{prop:2.2} that in this case the matrix $B(\xi)$ has only simple 
eigenvalues. We will make use of the representation of solutions as 
\begin{equation}\label{eq:SolRep}
   V(t,x) = \sum_{\nu(\xi)\in\spec B(\xi)} e^{\i t\nu(\D)} P_\nu(\D) V_0(x) 
\end{equation}
with corresponding eigenprojections (amplitudes) $P_\nu(\xi)$. The amplitudes are 
uniformly bounded on the set of all occurring $\xi$ and possess full asymptotic
expansions in $|\xi|$ as $\xi\to0$ together with their derivatives. 
Especially, by H\"ormander-Mikhlin multiplier theorem \cite[p. 96, Theorem 3]{Stein1} the operators $P_\nu(\D)$ are 
$L^p$-bounded for $1<p<\infty$.

It remains to consider the model multiplier $e^{\i t\nu(\xi)}$. From 
Proposition~\ref{prop:2.2} we know that
\begin{equation}
  |e^{\i t\nu(\xi)}|\lesssim  e^{-Ct|\xi|^2}, \qquad |\xi|\leq c,
\end{equation}
with suitable constants $c$ and $C$ and thus the $L^1$--$L^\infty$ estimate
\begin{align*}
  \|e^{\i t\nu(\D)} f\|_\infty &\leq \|e^{\i t\nu(\xi)}\hat f\|_1 \leq \|e^{\i t\nu(\xi)}\|_{L^1( \{ |\xi|\leq c \})} \|\hat f\|_\infty \\
  &\lesssim \|f\|_1 \int_0^c e^{-Ct |\xi|^2} |\xi|\d|\xi| \lesssim (1+t)^{-1} \;\|f\|_1
\end{align*}
holds for all $f\in L^1(\R^2)$ with $\supp\hat f\subseteq \{ |\xi|\leq c \}$. Riesz-Thorin 
interpolation \cite[Chapter 4.2]{BenSharp} with the obvious $L^2$--$L^2$ estimate gives the desired decay 
result.

\medskip\noindent
{\it Step 3. Hyperbolic directions, large frequencies.} We take the 
conical neighbourhoods small enough to exclude all multiplicities (related
to the eigenvalue which becomes real in the hyperbolic direction). Then 
similar to \eqref{eq:SolRep} the solution is represented as
\begin{equation}\label{eq:solRepHypFr}
  V(t,x)=e^{\i t\nu(\D)}P_\nu^+(\D)V_0(x)+e^{-\i t\nu(\D)}P_\nu^-(\D)V_0(x) +\tilde V(t,x),
\end{equation}
where $\tilde V(t,x)$ corresponds to the remaining parabolic eigenvalues of
$B(\xi)$ and satisfies the estimate from Step 1. Again we can use smoothness of
$P_\nu^\pm(\xi)$ together with the existence of a full asymptotic expansion as $|\xi|\to\infty$
to get $L^p$-boundedness of $P_\nu^\pm(\D)$ for $1<p<\infty$ from H\"ormander-Mikhlin 
multiplier theorem.

It remains to understand the model multiplier $e^{\pm\i t\nu(\xi)}$ for the hyperbolic 
eigenvalue related to the hyperbolic direction $\bar\eta$. 
Using the estimate from Proposition~\ref{prop:2.4} 
we conclude for $r>2$ 
\begin{align*}
  \| e^{\pm\i t\nu(\D)} f\|_\infty  &\leq \| e^{\pm\i t\nu(\xi)} \hat f\|_1 \leq \|e^{\pm\i t\nu(\xi)}|\xi|^{-r} \|_{L^1(S_1)} \|\,|\xi|^{r}\hat f\|_\infty \\
  &\lesssim \|\langle\D\rangle^r f\|_1 \; \int_c^\infty |\xi|^{1-r}\d |\xi|  \; \int_{-\epsilon}^\epsilon e^{-C_1 t\phi^2} \d\phi \\&\lesssim t^{-1/2}\; \|\langle\D\rangle^r f\|_1,\qquad t\geq1
\end{align*}
for all $f\in\langle\D\rangle^{-r}L^1(\R^2)$ with $\supp\hat f\subseteq S_1=\{ |\xi|\geq c,\; |\eta-\bar\eta|\leq\epsilon\} $. 
Riesz-Thorin interpolation with the $L^2$--$L^2$ estimate gives the 
desired decay result.

\medskip\noindent
{\it Step 4. Hyperbolic directions, small frequencies.}  Like for large hyperbolic 
frequencies we make use of the representation \eqref{eq:solRepHypFr} to separate hyperbolic
and parabolic influences. As in the previous cases the existence of full asymptotic
expansions imply that the projections $P_\nu^\pm(\D)$ are $L^p$-bounded for $1<p<\infty$. 

It remains to understand the model multiplier $e^{\pm\i t\nu(\xi)}$. Using Proposition~\ref{prop:2.4} 
we have $|e^{\pm\i t\nu(\xi)}|\lesssim e^{-C_2 t|\xi|^2\phi^2}$ such that after introducing polar co-ordinates
\begin{align*}
   \| e^{\pm\i t\nu(\D)} f\|_\infty  &\leq \| e^{\pm\i t\nu(\xi)} \hat f\|_1 \leq \|e^{\pm\i t\nu(\xi)} \|_{L^1(S_2)} \|f\|_1 \\
  &\lesssim \|f\|_1 \; \int_0^c \int_{-\epsilon}^\epsilon e^{-C_2 t\phi^2|\xi|^2} \d\phi |\xi|\d|\xi| \\
  &\lesssim t^{-1/2} \|f\|_1 \;\int_0^c \d |\xi| \lesssim t^{-1/2}\;\|f\|_1,\qquad t\geq1
\end{align*}
for all $f\in L^1(\R^2)$ with $\supp\hat f\subseteq S_2=\{ |\xi|\leq c,\;  |\eta-\bar\eta|\leq\epsilon\} $. 
Riesz-Thorin interpolation with the $L^2$--$L^2$ estimate gives the 
desired decay result.
\end{proof}

\noindent
{\bf Remark.} {\sl 1.}
In non-degenerate hyperbolic directions where the coupling function vanishes to order $\ell$ 
(cf. Example~\ref{expl:rhombic} case 3) we obtain by 
the same reasoning the weaker $L^p$--$L^q$ decay rate
\begin{equation}
    \| \chi(\D)P_{hyp}(\D) V(t,\cdot)\|_q \lesssim  (1+t)^{-\frac1{2\ell}(\frac1p-\frac1q)} \|V_0\|_{p,r}.
\end{equation}
It remains to understand whether this weaker decay rate is also sharp or whether an application
of stationary phase method may be used to improve this. We will discuss this in Section~\ref{app1}.\\
{\sl 2.} We can extend the estimate of  Theorem~\ref{thm3.1} to the limit case $p=1$, if we 
include the eigenprojections $P_\nu(\xi)$ into the considered model multiplier and use just their boundedness instead of H\"ormander-Mikhlin multiplier theorem. This is possible, because all the multiplier estimates were based on H\"older inequalities.

So far we understood properties of solutions to the transformed problem \eqref{eq:System3} for the 
vector-valued function $V(t,x)$ given by \eqref{eq:hatVdef} as
\begin{equation}
  V(t,x) =
  \begin{pmatrix}
    (\D_t+\mathcal D^{1/2}(\D)) U^{(0)}(t,x)\\
    (\D_t-\mathcal D^{1/2}(\D)) U^{(0)}(t,x)\\
    \theta(t,x)
  \end{pmatrix},
\end{equation}
where $U^{(0)}(t,x)=M^T(\D)U(t,x)$ is the elastic displacement after transformation with the diagonaliser
of the elastic operator. Because $M(\eta)$ is unitary and homogeneous of degree zero this diagonaliser is 
$L^p$-bounded for $1<p<\infty$ with bounded inverse. Thus we have 
\begin{align}
  \D_tU(t,x)&=M(\D)
  \begin{pmatrix}
    \frac12 & 0 & \frac12 & 0 & 0 \\
    0 & \frac12 & 0 & \frac12 & 0
  \end{pmatrix}
  V(t,x)\\
  \sqrt{A(\D)} U(t,x) &=M(\D)
  \begin{pmatrix}
    \frac12 & 0 & -\frac12 & 0 & 0 \\
    0 & \frac12 & 0 & -\frac12 & 0
  \end{pmatrix}
  V(t,x)
\end{align}
such that as corollary of Theorem~\ref{thm3.1} we obtain

\begin{cor}\label{cor3.1}
  Assume (A1) to (A4) and that the coupling functions vanish of first order in all hyperbolic directions.
 Then the solution $U(t,x)$ and $\theta(t,x)$ to \eqref{eq:System1} satisfy the  a-priori estimates
  \begin{multline}
    \|\D_t U(t,\cdot)\|_q+\|\sqrt{A(\D)} U(t,\cdot)\|_q + \|\theta(t,\cdot)\|_q \\
    \lesssim     (1+t)^{-\frac12(\frac1p-\frac1q)} \left(\|U_1\|_{p,r+1} + \|U_2\|_{p,r} + \|\theta_0\|_{p,r} \right)
  \end{multline}
  for dual indices $p\in(1,2]$, $pq=p+q$, and Sobolev regularity $r>2(1/p-1/q)$.
\end{cor}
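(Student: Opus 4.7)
The strategy is to transport the four micro-localised estimates of Theorem~\ref{thm3.1} from the auxiliary vector $V$ back to the physical variables $U_t$, $\sqrt{A(\D)}U$ and $\theta$. First I would simply add the four pieces of Theorem~\ref{thm3.1}. Since
$\chi(\D)P_{par}(\D)+(1-\chi(\D))P_{par}(\D)+\chi(\D)P_{hyp}(\D)+(1-\chi(\D))P_{hyp}(\D)=I$ on the Fourier side, and since all four bounds decay at least as $(1+t)^{-\frac12(\frac1p-\frac1q)}$ (the hyperbolic pieces being the slowest), one obtains the combined estimate
\begin{equation*}
\|V(t,\cdot)\|_q \lesssim (1+t)^{-\frac12(\frac1p-\frac1q)}\,\|V_0\|_{p,r}
\end{equation*}
for $p\in(1,2]$, $pq=p+q$ and $r>2(1/p-1/q)$.

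Next I would invert the substitution that produced $V$. The identities displayed just before the corollary already express $\D_t U$ and $\sqrt{A(\D)}U$ as
\begin{equation*}
\D_tU=M(\D)\Pi_+V,\qquad \sqrt{A(\D)}U=M(\D)\Pi_-V,
\end{equation*}
with $\Pi_\pm$ the constant $2\times 5$ projection matrices written there, and the last component of $V$ is $\theta$. The symbol $M(\eta)$ is smooth, unitary and $0$-homogeneous on $\S^1\setminus\{0\}$, so $M(\D)$ is an $L^p$-bounded Fourier multiplier for $1<p<\infty$ by the H\"ormander--Mikhlin theorem; the same holds trivially for $\Pi_\pm$. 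Consequently
\begin{equation*}
\|\D_t U(t,\cdot)\|_q+\|\sqrt{A(\D)}U(t,\cdot)\|_q+\|\theta(t,\cdot)\|_q \lesssim \|V(t,\cdot)\|_q,
\end{equation*}
which combined with the previous step gives the desired time-decay in terms of $\|V_0\|_{p,r}$.

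Finally I would unfold $V_0$ in terms of the Cauchy data $U_1,U_2,\theta_0$. Because $\D_t$ commutes with $M^T(\D)$ and $\mathcal D^{1/2}(\D)$, the first two components of $V_0$ are $M^T(\D)U_2\pm \mathcal D^{1/2}(\D)M^T(\D)U_1$, while the third is $\theta_0$. Again by $L^p$-boundedness of $M^T(\D)$ (for $1<p<\infty$) and by the identity $M(\D)\mathcal D^{1/2}(\D)M^T(\D)=\sqrt{A(\D)}$, which is a first order homogeneous pseudodifferential operator, one gets
\begin{equation*}
\|V_0\|_{p,r}\lesssim \|U_2\|_{p,r}+\|\sqrt{A(\D)}U_1\|_{p,r}+\|\theta_0\|_{p,r}\lesssim \|U_1\|_{p,r+1}+\|U_2\|_{p,r}+\|\theta_0\|_{p,r},
\end{equation*}
which is the claimed right-hand side. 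The only delicate point in the whole argument is the bookkeeping that explains why $U_1$ enters with one more derivative than $U_2$ and $\theta_0$; this is exactly the effect of the $\mathcal D^{1/2}(\D)$ factor inherited from the first-order reduction \eqref{eq:hatVdef}. Limiting the range to $1<p\le 2$ is needed only to ensure that the two $L^p$-multipliers $M(\D)$ and $M^T(\D)$ are bounded; the endpoint $p=1$ could be reached by the remark following Theorem~\ref{thm3.1}, absorbing $M(\D)$ into the model multiplier as indicated there.
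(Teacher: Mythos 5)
Your proposal is correct and follows essentially the same route the paper takes: combine the four micro-localised bounds of Theorem~\ref{thm3.1} into $\|V(t,\cdot)\|_q\lesssim(1+t)^{-\frac12(\frac1p-\frac1q)}\|V_0\|_{p,r}$, use the $L^p$-boundedness (H\"ormander--Mikhlin) of the zero-homogeneous unitary multiplier $M(\D)$ together with the displayed constant-matrix identities to pass to $\D_tU$, $\sqrt{A(\D)}U$, $\theta$, and unfold $V_0$ in terms of $U_1,U_2,\theta_0$ via $\mathcal D^{1/2}(\D)M^T(\D)=M^T(\D)\sqrt{A(\D)}$ to explain the extra derivative on $U_1$. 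Your closing remark about the restriction $p>1$ coming only from the multiplier $M(\D)$ and the $p=1$ endpoint matches the paper's own observation after the corollary.
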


\noindent
{\bf Remark.} Including the diagonaliser $M(\xi)$ into the model multipliers of Theorem~\ref{thm3.1}
allows to overcome the restriction on $p$ and to extend this statement up to $p=1$. We preferred this way of presenting the results because they allow to decouple both statements. Theorem~\ref{thm3.1} 
gives deeper insight into the asymptotic behaviour of solutions than Corollary~\ref{cor3.1} does.

\subsection{Coupling functions vanishing to higher order}\label{sec3.2}
\label{app1}
We have seen that under the assumption that the coupling functions $a_j(\eta)\in C^\infty(\S^1)$, $a_j(\eta)=\eta\cdot r_j(\eta)$ related to
the symbol of the elastic operator $A(\eta)$ have only zeros of first order, we can deduce $L^p$--$L^q$ decay estimates
without relying on the method of stationary phase. 

Now we want to discuss how to use the method of stationary phase to deduce decay estimates in the
remaining cases.  From Proposition~\ref{prop:2.0.1} we know that in hyperbolic directions
the imaginary part $\Im \nu_{j_0}^\pm(\xi)$ of the hyperbolic eigenvalues vanishes 
like the square of the corresponding coupling function $a_{j_0}^2(\eta)$, while the real part
$\Re \nu_{j_0}^\pm(\xi)$ is essentially described by $\pm\omega_{j_0}(\xi)$.  First, we make this more
precise and formulate an estimate for $\Re \nu_{j_0}(\xi)^\pm\mp\omega_{j_0}(\xi)$ and its derivatives.

\begin{prop}\label{prop3.3}
  Let $\bar\eta\in\S^1$ be hyperbolic with respect to the index $j_0$ and $a_{j_0}(\eta)$
  vanish to order $\ell$ in $\bar\eta$. Then there exists a conical neighbourhood of the direction $\bar\eta$
  such that for all $k=0,1,\ldots,2\ell-1$ the estimates
  \begin{subequations}
  \begin{align}
     \frac1{|\xi|}\left| \partial_\eta^k (\Re \nu_{j_0}^\pm(\xi)\mp\omega_{j_0}(\xi))\right|&\le c(|\xi|)  |\eta-\bar\eta|^{2\ell-k},\\
     \left| \partial_\eta^k \Im \nu_{j_0}^\pm(\xi)\right| &\le  c(|\xi|) |\eta-\bar\eta|^{2\ell-k},
  \end{align}
  \end{subequations}
  hold uniformly on it, where $c(|\xi|)\sim 1$ as $|\xi|\to\infty$ and $c(|\xi|)\sim|\xi|$ as $|\xi|\to0$.
\end{prop}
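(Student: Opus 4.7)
The plan is to extract the full vanishing of $a_{j_0}$ from $\nu_{j_0}^\pm(\xi)\mp\omega_{j_0}(\xi)$ and then estimate $\eta$-derivatives by Leibniz's rule, using the vanishing order hypothesis. The key algebraic identity -- already at the heart of the proof of Proposition~\ref{prop:2.0.1} -- comes from isolating the $j_0$-contribution in the characteristic equation of Proposition~\ref{prop:2.0}(4):
\begin{equation*}
  \nu_\pm^2(\xi) - \varkappa_{j_0}(\xi) = \frac{\gamma^2|\xi|^2 a_{j_0}^2(\eta)}{Q_\pm(\xi)},\qquad
  Q_\pm(\xi):=1 - \frac{\i\kappa|\xi|^2}{\nu_\pm(\xi)} - \gamma^2\frac{a_{j_1}^2(\xi)}{\nu_\pm^2(\xi)-\varkappa_{j_1}(\xi)},
\end{equation*}
for $j_1\ne j_0$. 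Proposition~\ref{prop:2.0.1} shows $Q_\pm(\xi)\to \gamma^2 q_{\bar\eta}(|\xi|)$ as $\eta\to\bar\eta$ with $|q_{\bar\eta}(|\xi|)|^2 = C_{\bar\eta}^2 + D_{\bar\eta}^2|\xi|^2$, and assumption (A4) excludes $C_{\bar\eta}=0$, so that $q_{\bar\eta}(|\xi|)\ne 0$ for all $|\xi|\ge 0$. Hence $Q_\pm$ is smooth and uniformly bounded away from zero on a sufficiently small conical neighborhood of $\bar\eta$. Combined with $\nu_\pm\pm\omega_{j_0}(\xi)\to\pm 2|\xi|\omega_{j_0}(\bar\eta)\ne 0$ and the factorisation $\nu_\pm^2-\varkappa_{j_0}(\xi)=(\nu_\pm-\omega_{j_0}(\xi))(\nu_\pm+\omega_{j_0}(\xi))$, this produces
\begin{equation*}
  \nu_{j_0}^\pm(\xi)\mp\omega_{j_0}(\xi) = |\xi|\, a_{j_0}^2(\eta)\,h_\pm(\xi),\qquad h_\pm(\xi):=\frac{\gamma^2|\xi|}{Q_\pm(\xi)(\nu_\pm(\xi)\pm\omega_{j_0}(\xi))},
\end{equation*}
with $h_\pm$ smooth in $\eta$ on that neighborhood.

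The conclusion then follows by Leibniz's rule combined with uniform bounds on $\partial_\eta^j h_\pm$. The vanishing hypothesis gives $|\partial_\eta^m a_{j_0}^2(\eta)|\le C|\eta-\bar\eta|^{2\ell-m}$ locally for $0\le m\le 2\ell-1$, so for $k\le 2\ell-1$,
\begin{align*}
  \big|\partial_\eta^k\big(a_{j_0}^2(\eta) h_\pm(\xi)\big)\big|
  &\le\sum_{m=0}^k\binom{k}{m}|\partial_\eta^m a_{j_0}^2(\eta)||\partial_\eta^{k-m}h_\pm(\xi)|\\
  &\lesssim c(|\xi|)|\eta-\bar\eta|^{2\ell-k},
\end{align*}
where one uses $|\eta-\bar\eta|\le 1$ in a small enough neighborhood so that $|\eta-\bar\eta|^{2\ell-m}\le|\eta-\bar\eta|^{2\ell-k}$ for $m\le k$. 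Taking real and imaginary parts of the factorisation (and recalling that $\omega_{j_0}(\xi)\in\R$) produces the two inequalities of the proposition, with the explicit $|\xi|$ factor accounting for the $|\xi|^{-1}$ weight on the left-hand side of the real-part bound.

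The main obstacle will be the second ingredient: verifying uniform bounds on $|\partial_\eta^j h_\pm(\xi)|$ with the correct $|\xi|$-behaviour at both ends of the frequency spectrum. This requires substituting the full asymptotic expansions of Propositions~\ref{prop:2.2} and~\ref{prop:2.3} for $\nu_\pm$ (which, by the discussion just before Proposition~\ref{prop:2.4}, may be differentiated term by term) into $Q_\pm$ and into the denominator of $h_\pm$, and then checking that the bookkeeping produces no new singularities in $|\xi|$. This rests crucially on the simplicity of the hyperbolic eigenvalue $\nu_\pm(\xi)$ on a sufficiently small conical neighborhood of $\bar\eta$, on the uniform non-vanishing of $\nu_\pm(\xi)$ and of $\nu_\pm^2(\xi)-\varkappa_{j_1}(\xi)$ there, and on the non-$\gamma$-degeneracy granted by (A4).
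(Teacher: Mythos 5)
The paper states Proposition~\ref{prop3.3} without an explicit proof, so I compare your argument against the underlying asymptotic results of Section~2 and the way Proposition~\ref{prop3.3} is used in Theorem~\ref{thm3.5}. Your route is sound and almost certainly the intended one: isolating the $j_0$-term in the rational characteristic identity of Proposition~\ref{prop:2.0}(4) and factorising through $\nu_\pm^2-\varkappa_{j_0}=(\nu_\pm-\omega_{j_0})(\nu_\pm+\omega_{j_0})$ is exactly the device behind Proposition~\ref{prop:2.0.1}, and (A4) (which is $\gamma^2\ne\varkappa_{j_0}(\bar\eta)-\varkappa_{j_1}(\bar\eta)$ for hyperbolic $\bar\eta$) is precisely what keeps $Q_\pm$ away from zero as $|\xi|\to0$. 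The Leibniz step with $|\partial_\eta^m a_{j_0}^2|\lesssim|\eta-\bar\eta|^{2\ell-m}$ for $m\le 2\ell-1$ is correct and clean. Two points, though.

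First, a substantive observation you should flag rather than silently absorb: for the real-part inequality your factorisation produces $\tfrac1{|\xi|}\big(\Re\nu_\pm\mp\omega_{j_0}(\xi)\big)=a_{j_0}^2(\eta)\,\Re h_\pm(\xi)$, and on the small-frequency side $\Re h_\pm$ tends to the nonzero real limit $\pm\gamma^2/\big(2\omega_{j_0}(\bar\eta)(1-\gamma^2/(\varkappa_{j_0}(\bar\eta)-\varkappa_{j_1}(\bar\eta)))\big)$ — equivalently, by Proposition~\ref{prop:2.2}, $\tfrac1{|\xi|}\Re(\nu_\pm\mp\omega_{j_0}(\xi))=\pm(\tilde\nu_{j_0}(\eta)-\omega_{j_0}(\eta))+\mathcal O(|\xi|^2)$ with $\tilde\nu_{j_0}-\omega_{j_0}\sim a_{j_0}^2$ independent of $|\xi|$. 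So your argument gives $c(|\xi|)\sim 1$ as $|\xi|\to 0$ in the first inequality, not $c(|\xi|)\sim|\xi|$ as the proposition states; the $\sim|\xi|$ behaviour belongs only to the imaginary-part inequality (where your argument in fact gives the stronger $|\xi|^2$). The weaker $c\sim1$ for the real part is exactly what the proof of Theorem~\ref{thm3.5} actually uses for small $|\xi|$ (the perturbation $\alpha$ need only be $\mathcal O(|\eta-\bar\eta|^{2\ell})$, not additionally small in $|\xi|$), so this appears to be an imprecision in the statement rather than a flaw in your approach — but your write-up presents the $\sim|\xi|$ claim for the real part as if your argument delivers it, which it does not. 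Second, the final step — uniform bounds on $\partial_\eta^j h_\pm$ across all $|\xi|$, including the transition region between the two asymptotic regimes — is only sketched; you should at least note that uniformity on the middle band of frequencies comes from compactness plus the separation of the hyperbolic eigenvalue on a sufficiently small conical neighbourhood, and that term-by-term differentiability of the expansions of Propositions~\ref{prop:2.2} and~\ref{prop:2.3} is what controls the two tails.
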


This estimate may be used to transfer the micro-localised decay estimate for the elasticity equation
to the thermo-elastic system on a sufficiently small conical neighbourhood of $\bar\eta$.  We need one further 
notion to prepare the main theorem of this section. Decay rates for solutions to the elasticity equation depend 
heavily on the {\em order of contact} between the sheets of the Fresnel surface and its tangents, cf.  \cite{Sug96} or 
point 2 of the concluding remarks on page~\pageref{conclrem}.

\begin{prop}\label{prop3.4}
For $\eta\in\S^1$ we denote by ${\bar\gamma}_j(\eta)$ the order of contact between the $j$-th sheet 
\begin{equation} 
S_j=\{\omega_j^{-1}(\eta)\eta\,|\,\eta\in\S^{1}\} 
\end{equation}
of the Fresnel curve and its tangent in the point $\omega_j^{-1}(\eta)\eta$.
Then
\begin{equation}
   \partial_\eta^k( \partial_\eta^2\omega_{j}(\eta)+\omega_j(\eta))=0, \qquad k=0,\ldots,{\bar\gamma}_j(\eta)-2,
\end{equation}
(if ${\bar\gamma}_j(\eta)>2$) and
\begin{equation}
 \partial_\eta^{{\bar\gamma}_j(\eta)}\omega_j(\eta)+\partial_\eta^{{\bar\gamma}_j(\eta)-2}\omega_j(\eta)\ne0.
\end{equation} 
\end{prop}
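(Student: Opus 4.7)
The plan is to reduce the geometric statement to the order of vanishing of a single scalar function built explicitly out of $\omega_j$. By rotational invariance I would fix $\bar\eta=(1,0)$ and parametrize $\S^1$ by the angle $\phi\in(-\epsilon,\epsilon)$ via $\eta(\phi)=(\cos\phi,\sin\phi)$. Writing $u(\phi):=\omega_j(\eta(\phi))$, the sheet $S_j$ near the reference point becomes the polar curve $r(\phi) = u(\phi)^{-1}\eta(\phi)$. Since $r'(0)\neq 0$, the angular parameter $\phi$ and arclength on $S_j$ differ by a local smooth diffeomorphism near $0$, so $\bar\gamma_j(\bar\eta)$ coincides with the order of vanishing at $\phi=0$ of the signed distance $d(\phi) = (r(\phi)-r(0))\cdot N_0$, where $N_0$ is a unit normal to $r'(0)$.

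The next step is the explicit computation of $d$. With $r(0)=(1/u(0),0)$, $r'(0)=(-u'(0)/u(0)^2,\,1/u(0))$, and corresponding unnormalised normal $\widetilde{N}_0=(-1/u(0),\,-u'(0)/u(0)^2)$, a direct expansion of $(r(\phi)-r(0))\cdot \widetilde{N}_0$ and clearing the denominator by $u(0)^2u(\phi)$ yields
\begin{equation*}
  u(0)^2 u(\phi)\,\big(r(\phi)-r(0)\big)\cdot\widetilde{N}_0 \;=\; G(\phi)\,:=\, u(\phi) - u(0)\cos\phi - u'(0)\sin\phi.
\end{equation*}
Because the resulting prefactor $u(0)^{-2}u(\phi)^{-1}|\widetilde{N}_0|^{-1}$ is smooth and non-vanishing near $\phi=0$, the order of vanishing of $d$ equals that of $G$.

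Differentiating $G$ term by term gives
\begin{equation*}
  G^{(k)}(0) \;=\; u^{(k)}(0) - u(0)\cos(k\pi/2) - u'(0)\sin(k\pi/2),\qquad k\ge 0,
\end{equation*}
which makes $G(0)=G'(0)=0$ automatic (confirming that the contact order is always at least $2$). I would then argue by a short induction on $k$ that the simultaneous vanishing $G^{(2)}(0)=\cdots=G^{(\bar\gamma-1)}(0)=0$ is equivalent to the recursion $u^{(m+2)}(0)+u^{(m)}(0)=0$ for the relevant range of $m$, i.e.\ to the vanishing of the expressions $\partial_\eta^m(\partial_\eta^2\omega_j+\omega_j)$ at $\bar\eta$. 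Indeed, under this recursion one has $u^{(2\ell)}(0) = (-1)^\ell u(0)$ and $u^{(2\ell+1)}(0) = (-1)^\ell u'(0)$ for all admissible $\ell$, and these are precisely the identities needed to cancel $u^{(k)}(0)$ against the trigonometric values in the above formula. A final substitution then yields
\begin{equation*}
  G^{(\bar\gamma)}(0) \;=\; u^{(\bar\gamma)}(0) + u^{(\bar\gamma-2)}(0),
\end{equation*}
which is the stated non-degeneracy condition.

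The approach is elementary; no stationary-phase or contact-geometric machinery is needed. The main delicacy is purely the bookkeeping in the last step, where the alternating-sign recursion for $u^{(k)}(0)$ has to be tracked carefully in order to rewrite the derivatives $G^{(k)}(0)$ as the combinations $\partial_\eta^m\omega_j+\partial_\eta^{m-2}\omega_j$ appearing in the statement and to pick off the correct index at which non-degeneracy first occurs.
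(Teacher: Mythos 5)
Your argument is correct and, modulo one presentational point, it achieves what the paper only sketches. The paper's proof invokes the curvature of the polar curve $r(\phi)=\omega_j^{-1}(\eta(\phi))\,\eta(\phi)$: by the standard formula for polar curves one computes
\begin{equation*}
  \kappa(\phi)=\frac{u(\phi)^3\bigl(u(\phi)+u''(\phi)\bigr)}{\bigl(u(\phi)^2+u'(\phi)^2\bigr)^{3/2}},\qquad u(\phi)=\omega_j(\eta(\phi)),
\end{equation*}
so the curvature factorises as $\omega_j+\partial_\eta^2\omega_j$ times a smooth non-vanishing function, and the relation to the contact order follows from the general fact that a curve has contact order $\bar\gamma$ with its tangent if and only if its curvature vanishes to order $\bar\gamma-2$. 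You instead compute the signed distance to the tangent directly and show it is a non-vanishing multiple of $G(\phi)=u(\phi)-u(0)\cos\phi-u'(0)\sin\phi$, then read off the statement from $G^{(k)}(0)=u^{(k)}(0)-u(0)\cos(k\pi/2)-u'(0)\sin(k\pi/2)$ by the alternating-sign induction. The two routes are essentially equivalent computations, but yours is a bit more self-contained: it bypasses the curvature formula and the separate step identifying vanishing order of curvature with contact order (which you replace by the observation that arclength and $\phi$ differ by a local diffeomorphism).

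One remark: when you say the recursion $u^{(m+2)}(0)+u^{(m)}(0)=0$ holds for ``the relevant range of $m$'', the correct range is $m=0,\ldots,\bar\gamma_j(\bar\eta)-3$, with the first non-vanishing combination occurring at $m=\bar\gamma_j(\bar\eta)-2$. This is what your induction actually gives, and it is the version that is consistent with the non-degeneracy claim $\partial_\eta^{\bar\gamma_j}\omega_j+\partial_\eta^{\bar\gamma_j-2}\omega_j\ne0$ and with the way Proposition~\ref{prop3.4} is applied in the proof of Theorem~\ref{thm3.5}. The range $k=0,\ldots,\bar\gamma_j(\eta)-2$ printed in the statement appears to be an off-by-one slip, since it would force the supposedly non-zero quantity to vanish as well; you are right to land on the smaller range.
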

\begin{proof}
  Follows by straight-forward calculation. The curvature of the $j$-th Fresnel curve $S_j$ at
  the  point $\omega_j^{-1}(\eta)\eta$ factorises as $\omega_j(\eta)+\partial_\eta^2\omega_j(\eta)$
  and a smooth non-vanishing analytic function. 
\end{proof}

For the following we chose the conical neighbourhood of the hyperbolic direction
$\bar\eta$ (with respect to the index $j_0$) small enough in order that the curvature of $S_{j_0}$ vanishes at most in
$\bar\eta$. 

\begin{thm} \label{thm3.5}
Let $\bar\eta$ be hyperbolic with respect to the index $j_0$ and let $a_{j_0}(\eta)$ vanish in
$\bar\eta$ to order $\ell>1$.  Let us assume further that ${\bar\gamma}_{j_0}(\bar\eta)<2\ell$. 
Then 
\begin{equation}
  || e^{\pm\i t\nu_{j_0}(\D)} f ||_{q} \lesssim (1+t)^{-\frac1{{\bar\gamma}_{j_0}(\bar\eta)}(\frac1p-\frac1q)} ||f||_{p,r}
\end{equation}
for dual indices $p\in[1,2]$, $pq=p+q$, regularity $r>2(1/p-1/q)$ and 
for all $f$ with $\supp \hat f$ contained in a sufficiently small conical neighbourhood of $\bar\eta$. 
\end{thm}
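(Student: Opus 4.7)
The plan is to reduce the claim to the known $L^p$–$L^q$ decay estimate for the elasticity equation along the single sheet $S_{j_0}$ by factorising
\[
  e^{\pm i t \nu_{j_0}(\xi)} = e^{\pm i t \omega_{j_0}(\xi)}\cdot m_t^\pm(\xi),
  \qquad m_t^\pm(\xi):=\exp\bigl(\pm i t(\nu_{j_0}(\xi)\mp\omega_{j_0}(\xi))\bigr),
\]
localising both factors to a sufficiently small conical neighbourhood of $\bar\eta$ and, following the scheme of Theorem~\ref{thm3.1}, further splitting into small and large frequencies with $\chi(\D)$ and $1-\chi(\D)$. The first factor is the half-wave propagator for the $j_0$-th Fresnel sheet. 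By Proposition~\ref{prop3.4} the curvature of $S_{j_0}$ vanishes at $\bar\eta$ precisely to order $\bar\gamma_{j_0}(\bar\eta)-2$, so Sugimoto's stationary-phase $L^p$–$L^q$ estimate \cite{Sug96} yields the decay rate $(1+t)^{-\frac{1}{\bar\gamma_{j_0}(\bar\eta)}(\frac1p-\frac1q)}$ for $e^{\pm i t\omega_{j_0}(\D)}$ on such conical neighbourhoods.

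The technical heart is to show that $m_t^\pm$ is a Fourier multiplier on $L^p$, $1<p<\infty$, with norm bounded \emph{uniformly in $t\ge 0$}. Since $\Im\nu_{j_0}\ge 0$, Propositions~\ref{prop:2.0.1} and~\ref{prop:2.4} already give $|m_t^\pm(\xi)|\le e^{-ct\,c(|\xi|)|\eta-\bar\eta|^{2\ell}}\le 1$. For the Mikhlin–Hörmander condition I apply Faà di Bruno to get
\[
  D_\xi^\alpha m_t^\pm(\xi) = m_t^\pm(\xi)\sum_{k,\,\alpha_1+\cdots+\alpha_k=\alpha} c_{k,\alpha}\,(\pm i t)^k\prod_{i=1}^k D_\xi^{\alpha_i}\bigl(\nu_{j_0}\mp\omega_{j_0}\bigr)(\xi).
\]
Converting to angular derivatives, Proposition~\ref{prop3.3} majorises each product by $c(|\xi|)^k |\eta-\bar\eta|^{2k\ell-|\alpha|}$. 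Absorbing the $t^k$ into the dissipation via $s^k e^{-cs}\le C_k$ with $s = t\,c(|\xi|)\,|\eta-\bar\eta|^{2\ell}$ gives $|\xi|^{|\alpha|}|D_\xi^\alpha m_t^\pm(\xi)|\le C_\alpha$ uniformly in $t$ and in the conical neighbourhood. Mikhlin's theorem then provides the uniform $L^p$-boundedness of $m_t^\pm(\D)$.

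Composing $e^{\pm i t\nu_{j_0}(\D)} = e^{\pm i t\omega_{j_0}(\D)}\circ m_t^\pm(\D)$ with the elastic estimate settles the range $p\in(1,2]$. The endpoint $p=1$ is handled as in the remark after Theorem~\ref{thm3.1}: incorporate $m_t^\pm$ directly into the oscillatory integral representing the kernel of $e^{\pm i t\omega_{j_0}(\D)}$; the uniform amplitude bounds from Step~3 preserve the $L^1\!\to\! L^\infty$ kernel estimate, after which Riesz–Thorin interpolation with the trivial $L^2\!\to\! L^2$ bound yields the claim.

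The main obstacle is the uniform multiplier estimate for $m_t^\pm$: the crude bound $|m_t^\pm|\le 1$ is useless for the higher-order Mikhlin condition because every $\xi$-derivative produces an extra factor of $t$. These powers of $t$ must be absorbed by the dissipation $e^{-t\Im\nu_{j_0}}$, and this absorption works \emph{precisely} because $\nu_{j_0}\mp\omega_{j_0}$ together with its angular derivatives up to order $2\ell-1$ vanish like $|\eta-\bar\eta|^{2\ell-k}$ by Proposition~\ref{prop3.3}. The hypothesis $\bar\gamma_{j_0}(\bar\eta)<2\ell$ is used here as the quantitative statement that the perturbation is genuinely of higher vanishing order than the elastic scale governing Sugimoto's decay, turning $m_t^\pm$ into a smoothing remainder on that scale.
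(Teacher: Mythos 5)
Your factorisation $e^{\pm\i t\nu_{j_0}}=e^{\pm\i t\omega_{j_0}}\cdot m_t^\pm$ is an appealing idea, and the appeal to Sugimoto's estimate for the half-wave factor is sound, but the claim that $m_t^\pm$ is a Mikhlin--H\"ormander multiplier uniformly in $t$ does not hold, and this is a genuine gap. Carry out your own computation carefully in polar coordinates: the Mikhlin condition requires (for the angular part) $|\partial_\phi^{|\alpha|} m_t^\pm|\lesssim 1$. By Fa\`a di Bruno and Proposition~\ref{prop3.3} the worst term is
\begin{equation*}
   |\partial_\phi^{|\alpha|}m_t^\pm(\xi)|
   \;\lesssim\; \Big(t\,c(|\xi|)\,|\phi|^{2\ell}\Big)^{k}\,|\phi|^{-|\alpha|}\,
   e^{-c\,t\,c(|\xi|)\,|\phi|^{2\ell}},
\end{equation*}
and after absorbing $s^k e^{-cs}\le C_k$ you are left with $|\phi|^{-|\alpha|}$, \emph{not} a constant. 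The reason is structural: the angular derivative of $\nu_{j_0}\mp\omega_{j_0}$ vanishes one order lower ($|\phi|^{2\ell-1}$) than the dissipation exponent ($|\phi|^{2\ell}$) that must absorb the accompanying factor of $t$, so each derivative costs a factor $|\phi|^{-1}$ that nothing cancels. Thus $m_t^\pm(\D)$ is not uniformly $L^p$-bounded, and the composition argument cannot be closed as stated.

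The paper circumvents exactly this point by \emph{not} factorising: it applies the Lemma of van der Corput directly to the oscillatory integral with the complex phase $\nu$, treating $e^{-t\Im\nu}\psi$ as the amplitude. Van der Corput only requires control of the \emph{total variation} $\int_{-\epsilon}^\epsilon |\partial_\phi(e^{-t\Im\nu}\psi)|\,\d\phi$, which, unlike the pointwise bound you need for Mikhlin, is indeed uniformly bounded (the integral of $t|\phi|^{2\ell-1}e^{-ct|\phi|^{2\ell}}$ is $O(1)$ even though the integrand blows up like $|\phi|^{-1}$). The hypothesis $\bar\gamma_{j_0}(\bar\eta)\le 2\ell-1$ enters there not to make $m_t^\pm$ smooth, but to ensure the perturbation $\Re\nu-\omega_{j_0}$ has $\bar\gamma_{j_0}(\bar\eta)$ controlled angular derivatives (Proposition~\ref{prop3.3}), so that the lower bound on $\partial_\phi^{\bar\gamma_{j_0}(\bar\eta)}$ of the full phase survives the perturbation. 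If you want to keep your two-factor picture, you would have to replace the Mikhlin claim by an $L^1$-type total-variation bound on the kernel or fold $m_t^\pm$ directly into the stationary-phase amplitude, which is in effect what the paper does.
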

\begin{proof}
We distinguish between two cases related to the Fourier support of $f$, for general $f$ we can use linearity. 
In both cases we apply the method of stationary phase. Key lemma will be the Lemma of van der Corput, cf. \cite[p.334]{Stein}, combined for 
large frequencies with a dyadic decomposition.  It suffices to consider $t\ge1$, the estimate 
for $t\le1$ follows from the uniform boundedness of the Fourier multiplier together with
Sobolev embedding using the regularity imposed on the data, $r>2(1/p-1/q)$.

In the following we skip the index $j_0$ of the eigenvalue. Thus $\omega(\eta)$ stands for $\omega_{j_0}(\eta)$
and ${\bar\gamma}(\eta)$ for ${\bar\gamma}_{j_0}(\eta)$ to shorten the notation.

\noindent
{\it Large and medium frequencies.} Assume that $\hat f$ is supported in a sufficiently small conical neighbourhood of $\bar\eta$ bounded away from zero, $|\xi|>1$. We will use a dyadic decomposition in the radial variable. 
Let for this $\chi\in C_0^\infty(\R)$ be chosen in such a way that $\chi(\R)=[0,1]$, $\supp\chi\subseteq[1/2,2]$ and $\sum_{j\in\mathbb Z} \chi(2^js)=1$   for all $s\in\R_+$. We set $\chi_j(s)=\chi(2^{-j}s)$ such that 
$\supp\chi_j\subseteq[2^{j-1},2^{j+1}]$. 

We follow the treatment of Brenner \cite{Bre75} and use Besov spaces. Due to our assumptions on the Fourier support 
of $f$ Besov norms are given by 
\begin{equation}
   ||f||_{B^r_{p,q}}=  \big\| 2^{jr}  ||\chi_j(|\D|)f ||_p \big\|_{\ell^q(\mathbb N_0)}
\end{equation}   
and corresponding Besov spaces are related to Sobolev and $L^p$-spaces by the embedding relations 
\begin{equation}
H^{p,r}\hookrightarrow B^r_{p,2},\qquad B^0_{q,2}\hookrightarrow L^q
\end{equation}
for $p\in(1,2]$ and $q\in[2,\infty)$. Thus it suffices to prove an $B^r_{p,2}\to B^0_{q,2}$ estimate
for dual $p$ and $q$. The case $p=q=2$, $r=0$ is trivial by Plancherel and the uniform boundedness of the multiplier,  
we concentrate on the $B^2_{1,2}\to  B^0_{\infty,2}$ estimate.
Therefore, we use
\begin{equation}\label{eq3.19}
  || e^{\pm\i t\nu(\D)}f||_{B^0_{\infty,2}}=\left(\sum_{j\in\mathbb Z}  ||  \chi_j(\D) e^{\pm\i t\nu(\D)}f||_\infty
  ^2\right)^{1/2}
  \le  \sup_j I_j(t) \;
  ||f||_{B^r_{1,2}},
 \end{equation}
where $I_j(t)$ are estimates of the dyadic components of the operator,
  \begin{equation}
    I_j(t)=\sup_{x\in\R^2} |\mathcal F^{-1} [ e^{\i t\nu(\xi)}\psi(\xi) \chi_j( |\xi| )|\xi|^{-r} ]|.
  \end{equation}
  Here $\psi(\xi)$ localises to a small conical neighbourhood of $\bar\eta$ bounded away from zero, $|\xi|>1$, 
  containing the support of $\hat f$.  We assume $|\psi(\xi)|+|\partial_\eta\psi(\xi)|+|\partial_{|\xi|}\psi(\xi)|\lesssim1$ and $\psi\equiv1$ on
  a smaller conic set around $\bar\eta$.
  We introduce polar co-ordinates $|\xi|$ and $\phi$ (with $\phi=0$ corresponding 
  to $\bar\eta$) and set $x=tz$.
  Using that the hyperbolic directions $\bar\eta\in\S^1$ are isolated we can assume that the 
  conical neighbourhood under consideration contains no further hyperbolic direction. For the 
  calculation of $I_j(t)$ we integrate over the interval $[-\epsilon,\epsilon]$ for $\phi$.  
  This yields for all $j\in\mathbb N_0$ 
  \begin{align*}
    I_j(t) &= \sup_{z\in\R^2} 
    	\left| \int_{2^{j-1}}^{2^{j+1}} \int_{-\epsilon}^\epsilon  e^{\i t( \Re \nu(\xi)+z\cdot\xi ) -t\Im \nu(\xi) }
	\psi(\xi)\chi_j( |\xi| )|\xi|^{1-r}\d\phi \d|\xi|\right|\\
    &= 2^{j(2-r)}  \sup_{z\in\R^2} \left| \int_{1/2}^2 \int_{-\epsilon}^\epsilon  e^{\i2^{j}t(\Re\nu(2^j\xi)2^{-j}+z\cdot\xi )} e^{-t\Im \nu(2^j\xi) }\psi(2^j\xi)\chi(|\xi|)|\xi|^{1-r}\d\phi\d|\xi|\right|.
  \end{align*}
  Due to Proposition~\ref{prop3.3} (let us restrict to the $-$ case) we have
  
  \begin{equation}
    \Re\nu(2^j\xi)=2^j\omega(\xi)+2^j|\xi|\alpha(2^j\xi),
  \end{equation}
  where
  \begin{equation}\label{(1)}
     \left|\partial_\phi^k\alpha(2^j\xi)\right|\le C_k\,|\phi|^{2\ell-k},\qquad k=0,1,\ldots,2\ell-1.
  \end{equation}
  By our assumptions $2\ell-1\ge{\bar\gamma}(\bar\eta)$. We use this to reduce the problem to
  properties of the elastic eigenvalue $\omega_j(\eta)$ and consider
  \begin{multline}
     I_j(t)= 2^{j(2-r)}  \sup_{z\in\R^2} \bigg| \int_{1/2}^2 \int_{-\epsilon}^\epsilon  e^{\i t2^{j}|\xi|(\omega(\eta)
    +z\cdot\eta+\alpha(2^j\xi))}  e^{-t\Im \nu(2^j\xi) }\psi(2^j\xi)\d\phi\chi(|\xi|)|\xi|^{1-r}\d\xi\bigg|.
  \end{multline}
  Note that, $|\alpha(2^j\xi)|\lesssim |\phi|^{2\ell}$ is small, if we choose the neighbourhood of 
  $\bar\eta$ small enough.
 
  If $|\omega(\eta)+z\cdot\eta|\ge\delta$ for some small $\delta$ the outer integral has no stationary
  points and integration by parts implies arbitrary polynomial decay. We restrict to the $z\in\R^2$ 
  with $|\omega(\eta)+z\cdot\eta|\le\delta$ and consider only the inner integral
    \begin{equation}
   I_j(t,|\xi|) = \left| \int_{-\epsilon}^\epsilon  e^{\i t2^{j}|\xi|(\omega(\eta)
    +z\cdot\eta+\alpha(2^j\xi))} e^{-t\Im \nu(2^j\xi) }\psi(2^j\xi)\d\phi\right|.
  \end{equation}
  It can be estimated using the Lemma of van der Corput. For this we need an estimate for derivatives of the phase.
  We start by considering the unperturbed phase $\omega(\eta)+z\cdot\eta$. Differentiation yields
 \begin{multline}	
\partial_\eta^{{\bar\gamma}(\bar\eta)}(\omega(\eta)+z\cdot\eta)= \partial_\eta^{{\bar\gamma}(\bar\eta)-2}(\partial_\eta^2\omega(\eta)+\omega(\eta))
-\partial_\eta^{{\bar\gamma}(\bar\eta)-4}(\partial_\eta^2\omega(\eta)+\omega(\eta))+-\\
\cdots+-\begin{cases}
	\omega(\eta)+z\cdot\eta,\qquad &2|{\bar\gamma}(\bar\eta)\\
	\partial_\eta\omega(\eta)+z\cdot\eta^T,&2\!\!\not|{\bar\gamma}(\bar\eta)
        \end{cases}
  \end{multline}
and by Proposition~\ref{prop3.4} the first term is non-zero for $\bar\eta$ while the others are small in a 
neighbourhood of $\bar\eta$. Choosing $\delta$ and the neighbourhood small enough and using \eqref{(1)} this implies
a lower bound on the ${\bar\gamma}(\bar\eta)$-th derivative of the phase,
\begin{equation}
     |\partial_\eta^{{\bar\gamma}(\bar\eta)}(\omega(\eta)+z\cdot\eta+\alpha(2^j\xi))|\gtrsim 1
\end{equation}
uniformly on this neighbourhood of $\bar\eta$ and independent of $j$. Thus we conclude by \cite[p. 334]{Stein} for all 
$t\ge1$ and uniformly in $z$ with $|\omega(\eta)+z\cdot\eta|\le\delta$
   \begin{align}
   I_j(t,|\xi|)\le C_k (t2^{j}|\xi|)^{-\frac1{{\bar\gamma}(\bar\eta)}}\left( 1 +\int_{-\epsilon}^\epsilon
    \left| \partial_\phi e^{-t\Im \nu(2^j\xi)}\psi(2^j\xi)\right|\d\phi\right).
  \end{align}
 To estimate the remaining integral we use Proposition~\ref{prop3.3}
  \begin{align*}
  \int_{-\epsilon}^\epsilon \left| \partial_\phi e^{-t\Im \nu(2^j\xi)}\psi(2^j\xi)\right|\d\phi
  &\lesssim \int_{-\epsilon}^\epsilon e^{-ct\phi^{2\ell}} |\phi|^{2\ell-1} t\d\phi
  =\frac 2c \int_0^\epsilon  e^{-ct\phi^{2\ell}} \phi^{2\ell-1} ct\d\phi\\
  &=-\frac 2c \left[e^{-ct\phi^{2\ell}}\right]_0^\epsilon\lesssim1.
  \end{align*}
Integrating over $\xi\in[1/2,2]$ and  choosing $r\ge2$ we obtain for $j\in\mathbb N_0$
  \begin{equation}\label{(2)}
      I_j(t)\lesssim (1+t)^{-\frac1{{\bar\gamma}(\bar\eta)}},
  \end{equation}
  where the occurring constant is independent of $j$ and \eqref{eq3.19} implies the desired
  $B^2_{1,2}\to B^0_{\infty,2}$ estimate. Interpolation with the $L^2$--$L^2$ estimate
  gives the corresponding $B^r_{p,2}\to B^0_{q,2}$ estimates with $r\ge 2(1/p-1/q)$
  and finally the embedding relations to Sobolev and $L^p$-spaces the desired estimate
  \begin{equation}
     ||e^{\i t\nu(\D)} f||_q\lesssim (1+t)^{-\frac1{{\bar\gamma}(\bar\eta)}} ||f||_{p,r}
  \end{equation}
  for all $f$ satisfying the required Fourier support conditions.
  
\medskip\noindent
{\it Small frequencies.} Assume now that $\hat f$ is supported in a small conical neighbourhood of $\bar\eta$
with $|\xi|\le2$. In this case we estimate directly by the method of stationary phase. We sketch the main ideas.
It is sufficient to estimate
\begin{equation}
  I(t)=\sup_{x\in\R^2} \left| \mathcal F^{-1}\left[e^{\i t\nu(\xi)}\psi(\xi)\chi(|\xi|)\right]\right|,
\end{equation}
where the function $\psi\in C^\infty(\R_+)$ localises to the small neighbourhood of $\bar\eta$ with $|\xi|\le2$. 
Again we require $|\psi(\xi)|+|\partial_\eta\psi(\xi)|+|\partial_{|\xi|}\psi(\xi)|\lesssim1$.
In correspondence to large frequencies $I(t)$ equals to
\begin{align}
 I(t)&=\sup_{z\in\R^2} \left|\int_0^\delta\int_{-\epsilon}^\epsilon e^{\i t|\xi|(\omega(\eta)+z\cdot\eta+\alpha(\xi))}e^{-t\Im\nu(\xi)}  \psi(\xi)\chi(|\xi|)\d\phi|\xi|\d|\xi| \right|.
\end{align}
We distinguish between $|\omega(\eta)+z\cdot\eta|\ge\delta$, where the outer integral has no stationary points,
and $|\omega(\eta)+z\cdot\eta|\le\delta$. In the first case we can apply {\em one} integration by parts and get
$t^{-1}$ using 
\begin{equation}
|\xi|\,|\partial_{|\xi|} e^{-t\Im\nu(\xi)}|\lesssim |\xi|^2 t e^{-t |\xi|^2\phi^{2\ell}}\lesssim 1.
\end{equation}
In the second case we can reduce the consideration to 
\begin{equation}
I(t,|\xi|)= \left| \int_{-\epsilon}^\epsilon e^{\i t|\xi|(\omega(\eta)+z\cdot\eta+\alpha(\xi))}e^{-t\Im\nu(\xi)}\psi(\xi)\d\phi \right|
\end{equation}
and an application of the Lemma of van der Corput. By Propositions~\ref{prop3.3} and \ref{prop3.4} we have
\begin{equation}
  \partial_\phi^{{\bar\gamma}(\bar\eta)} (\omega(\eta)+z\cdot\eta+\alpha(\xi))\ne 0
\end{equation}
for all $\xi$ in a sufficiently small conic neighbourhood of $\bar\eta$. So we obtain
\begin{equation}
  I(t,|\xi|)\lesssim (t|\xi|)^{-\frac1{{\bar\gamma}(\bar\eta)}} \left(1+\int_{-\epsilon}^\epsilon \left|\partial_\phi
  e^{-t\Im\nu(\xi)}\psi(\xi)\right|\d\phi\right)\lesssim  (t|\xi|)^{-\frac1{{\bar\gamma}(\bar\eta)}} .
\end{equation}
Integrating with respect to $|\xi|$ yields the estimate for $I(t)$
\begin{equation}
  I(t)\le \int_0^2 I(t,|\xi|)|\xi|\d|\xi|\lesssim t^{-\frac1{{\bar\gamma}(\bar\eta)}} \int_0^2 |\xi|^{1-\frac1{{\bar\gamma}(\bar\eta)}}\d|\xi|\lesssim (1+t)^{-\frac1{{\bar\gamma}(\bar\eta)}},\qquad t\ge1
\end{equation}
and by H\"older inequality the $L^1$--$L^\infty$ estimate
\begin{equation}
  ||e^{\i t\nu(\D)}f||_\infty\le I(t) ||f||_1\lesssim  (1+t)^{-\frac1{{\bar\gamma}(\bar\eta)}} ||f||_1
\end{equation}
for all $f$ satisfying the required Fourier support condition.

By interpolation with the obvious $L^2$--$L^2$ estimate we get $L^p$--$L^q$ estimates and combining them with the estimate of the first part proves the theorem.
\end{proof}

If the order of contact exceeds the vanishing order of the coupling funtions, the best we can do is to
use the idea of Section~\ref{sec3.1} and to apply standard multiplier estimates. As already remarked 
after the proof of Theorem~\ref{thm3.1} we obtain as decay rate in this case:
\begin{thm}\label{thm3.6}
Let $\bar\eta$ be hyperbolic with respect to the index $j_0$ and
let $a_{j_0}(\eta)$ vanish in $\bar\eta$ of order $\ell$. Let us assume further that
 ${\bar\gamma}_{j_0}(\bar\eta)\ge2\ell$, where ${\bar\gamma}_{j_0}(\eta)$ is defined in Proposition~\ref{prop3.4}. 
 Then 
\begin{equation}
  || e^{\pm\i t\nu_{j_0}(\D)} f ||_{q} \lesssim (1+t)^{-\frac1{2\ell}(\frac1p-\frac1q)} ||f||_{p,r}
\end{equation}
for dual indices $p\in[1,2]$, $pq=p+q$, regularity $r>2(1/p-1/q)$ and 
for all $f$ with $\supp \hat f$ contained in a sufficiently small conical neighbourhood of $\bar\eta$. 
\end{thm}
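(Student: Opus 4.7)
The plan is to adapt Steps 3 and 4 of the proof of Theorem~\ref{thm3.1} to a coupling function vanishing of arbitrary order $\ell$. As observed in the remark following that theorem, the structure of the argument is completely parallel; only the size of the imaginary part of the hyperbolic eigenvalue changes. The assumption ${\bar\gamma}_{j_0}(\bar\eta)\ge 2\ell$ itself does not enter the proof, it merely delineates the regime in which the van der Corput input of Theorem~\ref{thm3.5} no longer improves on the multiplier estimate, so that the rate obtained here is the best one this technique produces.

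The key preparatory step is to generalise Proposition~\ref{prop:2.4}. The factorisation $\Im\nu_{j_0}^\pm(\xi)=a_{j_0}^2(\eta)\,\mathrm N_{j_0}^\pm(\xi)$ established there, combined with the full asymptotic expansions of Propositions~\ref{prop:2.2} and~\ref{prop:2.3}, shows that $\mathrm N_{j_0}^\pm(\xi)$ is smooth and non-vanishing, bounded away from $0$ for $|\xi|\ge c$ and comparable to $|\xi|^2$ as $|\xi|\to 0$. Since $a_{j_0}$ vanishes to order $\ell$ at $\bar\eta$, we obtain $\Im\nu_{j_0}^\pm(\xi)\sim|\eta-\bar\eta|^{2\ell}$ for $|\xi|\ge c$ and $\Im\nu_{j_0}^\pm(\xi)\sim|\xi|^2|\eta-\bar\eta|^{2\ell}$ for $|\xi|\le c$, uniformly on a sufficiently small conical neighbourhood of $\bar\eta$.

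With these bounds at hand I would mimic Steps 3 and 4 of Theorem~\ref{thm3.1}. Splitting $f$ at $|\xi|\sim 1$, using $\|e^{\pm\i t\nu_{j_0}(\D)}f\|_\infty\le\|e^{\pm\i t\nu_{j_0}(\xi)}\hat f\|_1$, and introducing polar coordinates $(|\xi|,\phi)$ centred at $\bar\eta$, the large-frequency part becomes
\[
\int_c^\infty\int_{-\epsilon}^\epsilon e^{-Ct\phi^{2\ell}}\,|\xi|^{1-r}\,\d\phi\,\d|\xi|\;\lesssim\; t^{-\frac{1}{2\ell}}\int_c^\infty|\xi|^{1-r}\,\d|\xi|\;\lesssim\; t^{-\frac{1}{2\ell}}
\]
for any $r>2$, via the substitution $\phi=t^{-1/(2\ell)}\sigma$. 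The analogous computation for small frequencies, after the rescaling $\sigma=t^{1/(2\ell)}|\xi|^{1/\ell}\phi$, gives
\[
\int_0^c\int_{-\epsilon}^\epsilon e^{-Ct|\xi|^2\phi^{2\ell}}\,\d\phi\,|\xi|\,\d|\xi|\;\lesssim\; t^{-\frac{1}{2\ell}}\int_0^c|\xi|^{1-1/\ell}\,\d|\xi|\;\lesssim\; t^{-\frac{1}{2\ell}},
\]
where the radial integral converges because $\ell\ge 1$. Together the two regimes yield an $L^1$--$L^\infty$ bound of order $(1+t)^{-1/(2\ell)}$ when one allows a Sobolev weight $r>2$ on the data. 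Riesz--Thorin interpolation with the trivial $L^2$--$L^2$ estimate then produces the claimed $(1+t)^{-\frac{1}{2\ell}(\frac1p-\frac1q)}$ decay with $r>2(1/p-1/q)$, and the case $p=1$ is admissible because no Mikhlin-type projection needs to be absorbed into the multiplier.

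The main technical burden is precisely the extension of Proposition~\ref{prop:2.4} to $\ell>1$; once the comparable bounds on $\Im\nu_{j_0}^\pm$ are in place the rest is the polar-coordinate computation used in Theorem~\ref{thm3.1}, with $\phi^2$ replaced by $\phi^{2\ell}$. No stationary phase argument is invoked, which is exactly why the rate here is worse than what Theorem~\ref{thm3.5} delivers in the complementary regime ${\bar\gamma}_{j_0}(\bar\eta)<2\ell$.
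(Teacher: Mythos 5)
Your proof is correct and follows essentially the same route the paper takes: Theorem~\ref{thm3.6} is stated without a separate proof, with the preceding paragraph and Remark~1 after Theorem~\ref{thm3.1} pointing back to the multiplier estimates of Steps~3 and~4 of that proof with $\phi^2$ replaced by $\phi^{2\ell}$. Your preparatory bounds $\Im\nu_{j_0}^\pm(\xi)\sim|\eta-\bar\eta|^{2\ell}$ for $|\xi|\ge c$ and $\Im\nu_{j_0}^\pm(\xi)\sim|\xi|^2|\eta-\bar\eta|^{2\ell}$ for $|\xi|\le c$, the rescalings $\phi=t^{-1/(2\ell)}\sigma$ and $\sigma=t^{1/(2\ell)}|\xi|^{1/\ell}\phi$, and the observation that the hypothesis ${\bar\gamma}_{j_0}(\bar\eta)\ge2\ell$ only delimits the regime rather than entering the estimate, are all in line with the paper's intent.
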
 

\section{Concluding remarks}
{\bf 1.} In a second part of this note, \cite{WirMedia}, we will give concrete applications of the
general treatment presented so far. From the remarks and examples we made in the previous
sections it follows that we indeed cover the estimates of \cite{Borkenstein} for cubic media and 
\cite{Doll} for rhombic media in the situations of coupling vanishing to first order. 

In \cite{WirMedia} we will discuss the situation of higher order tangencies for rhombic media and 
give a new estimate extending the results of \cite{Doll}. 
Furthermore, we will give concrete examples of media for all achievable decay rates in the case of  a differential elastic operator $A(\D)$.

\noindent
{\bf 2.}
In general decay rates of solutions are determined by vanishing properties of the coupling functions. 
If the coupling functions are nonzero, decay rates are parabolic. If they vanish to sufficiently high 
order, decay rates are hyperbolic and determined by the elastic operator (micro-localised to this direction). In the intermediate case simple multiplier estimates are sufficient.

\noindent
{\bf 3.}\label{conclrem}
In the case of isotropic media one of the coupling functions vanishes identically. In this case one pair
of eigenvalues of the matrix $B(\xi)$ is purely real $\nu_\pm(\xi)=\pm(\lambda+\mu)|\xi|$ and the components related to these
eigenvalues solve a wave equation. Thus they satisfy the usual Strichartz type decay estimates, \cite{Bre75},
\begin{equation}\label{StrDecay}
  \| e^{\pm it(\lambda+\mu)|\D|}P^\pm(\D)V_0 \|_q \lesssim (1+t)^{-\frac12(\frac1p-\frac1q)}\|V_0\|_{p,r}
\end{equation}
for $p\in(1,2]$, $pq=p+q$ and $r=2(1/p-1/q)$. More generally, if we know that one pair of
eigenvalues satisfies $\nu_\pm(\xi)=\pm|\xi|\omega(\eta)$ for all $\eta\in\mathbb S^1$ with a smooth function $\omega:\S^1\to\R_+$,
the decay rates for the corresponding components depend heavily on geometric properties of the Fresnel curve
$S=\{\,\omega^{-1}(\eta)\eta\, |\, \eta\in\S^1\,\}\subset\R^2 $. Following \cite{Racke} the estimate \eqref{StrDecay} is valid in 
this case as long as the curvature of $S$ never vanishes. If there exist directions $\eta$ where the curvature of $S$
vanishes, the constant $\frac12$ in the exponent has to be altered to $\frac1{\bar\gamma}$, where $\bar\gamma$ denotes the maximal order
of contact of the curve $S$ to its tangent, \cite{Sug96}.

\noindent
{\bf 4.} The main focus of this paper was on the treatment of non-degenerate cases, thus assumptions (A1) to (A4) are required. Nevertheless, we showed how to obtain a control on the eigenvalues of the matrix $B(\xi)$ in the exceptional cases where either (A3) or (A4) is violated. In the first case the treatment of large frequencies has to be replaced by the investigation of the expression obtained in Proposition~\ref{prop:2.3a}, while in the latter one a corresponding replacement has to be made for small frequencies. 

\noindent
{\bf 5.} The results in this paper are essentially two-dimensional. For the study of anisotropic thermo-elasticity in 
higher space dimensions there arise two essential problems. The first is that we can not assume (A3). For example in 
three space dimensions and for cubic media the symbol of the elastic operator $A:\S^{2}\to\C^{3\times 3}$ has degenerate directions 
with multiple eigenvalues related to the crystal axes. Nevertheless, the multi-step diagonalisation scheme used in 
Section~\ref{sec2.2} can be adapted to such a case. This will be done in the sequel. The second problem is that the
geometry of the set of hyperbolic directions becomes more complicated. In general we can not expect isolated 
hyperbolic directions, it will be necessary to consider manifolds of hyberbolic directions on $\S^2$.

\end{document}